\newtheorem{theorem}{Theorem}[section]
\newtheorem{lemma}[theorem]{Lemma}
\newtheorem{proposition}[theorem]{Proposition}
\newtheorem{definition}[theorem]{Definition}
\numberwithin{equation}{section}
\begin{document}
\title {A note on the number of irrational odd zeta values}

\author{Li LAI, Pin YU}

\address{Department of Mathematics and Yau Mathematical Sciences Center, Tsinghua University\\ Beijing, China}
\email{lail14@mails.tsinghua.edu.cn}
\email{yupin@mail.tsinghua.edu.cn}
%\classification{11J72 (primary), 11M06, 33C20 (secondary)}
%\keywords{irrationality, zeta values, hypergeometric series.}
\maketitle

\begin{abstract}
It is proved that, for all odd integer $s \geqslant s_0(\varepsilon)$, there are at least $\big( c_0 - \varepsilon \big) \frac{s^{1/2}}{(\log s)^{1/2}} $ many irrational numbers among the following odd zeta values: $\zeta(3),\zeta(5),\zeta(7),\cdots,\zeta(s)$. The constant $\displaystyle c_0 = 1.192507\ldots$ can be expressed in closed form.

The work is based on the previous work of Fischler, Sprang and Zudilin \cite{FSZ2019}, improves the lower bound $2^{(1-\varepsilon)\frac{\log s}{\log\log s}}$ therein. The main new ingredient is an optimal design for the zeros of the auxiliary rational functions, which relates to the inverse of Euler totient funtion.
\end{abstract}

\tableofcontents

\section{Introduction}
\label{Chapter_Introduction}

The Riemann zeta function $\zeta(s)$ is one of the most fascinating objects in mathematics. Due to the work of Euler and Lindemann, it is well known that for any positive integer $k$, the Riemann zeta value $\zeta(2k)$ is a (non-zero) rational multiple of $\pi^{2k}$, therefore, is transcendental. One may want to further investigate the odd zeta values, i.e., the numbers $\zeta(2k+1)$'s. It is conjectured that $\pi, \zeta(3), \zeta(5),\zeta(7),\cdots$ are algebraically independent over $\mathbb{Q}$, but very little is known.

We mention a few works on this subject. In 1978, Ap\'ery proved that $\zeta(3)$ is irrational \cite{Ap79}. (see also van der Poorten's report \cite{Po79} and Beuker's alternative proof \cite{Be79}. For a survey, see \cite{Fi04}.) In 2000, Ball and Rivoal \cite{BR01} (see also Rivoal \cite{Ri00}) showed that for all odd integer $s \geqslant 3$, we have the following asymptotics as $s \rightarrow +\infty$:
\[\dim_{\mathbb{Q}}\Big({\rm{Span}}_{\mathbb{Q}}\big(1,\zeta(3),\zeta(5),\zeta(7),\cdots,\zeta(s)\big) \Big) \geqslant \frac{1 + o(1)}{1 + \log 2} \log s.\]
The proof of Ball and Rivoal makes use of Nesterenko's linear independence criterion \cite{Ne85} and the following auxiliary rational functions:
\[R_{n}^{\rm{(BR)}}(t) = n!^{s-2r} \frac{\prod_{j=0}^{(2r+1)n}(t-rn+j)}{\prod_{j=0}^{n}(t+j)^{s+1}}.\]
%A punch line of the argument is that $\sum_{t=1}^{\infty} R_n^{\rm{(BR)}}(t)$ is a $\mathbb{Q}$-linear combination of $1$, $\zeta(3)$, $\zeta(5)$, $\cdots$, $\zeta(s-2)$ and $\zeta(s)$.
As a corollary, there are infinitely many irrational numbers among odd zeta values. In 2018, Zudilin \cite{Zu18} studied the following rational functions (with $s = 25$)
\[R_n^{\rm{(Z)}}(t) = 2^{6n} n!^{s-5} \frac{\prod_{j=0}^{6n} (t-n + j/2)}{ \prod_{j=0}^{n} (t+j)^{s+1}},\]
and proved that both series $\sum_{t=1}^{\infty} R_n^{\rm{(Z)}}(t)$ and $\sum_{t=1}^{\infty} R_n^{\rm{(Z)}}\left(t+\dfrac{1}{2}\right)$ are $\mathbb{Q}$-linear combinations of odd zeta values with related coefficients, it provides a new elimination procedure. Zudilin's new idea inspires many works afterwards (see Sprang \cite{Sp18} and Fischler \cite{Fi19}). Based on further developments in \cite{Zu18} and an important arithmetic observation of Sprang \cite[Lemma 1.4]{Sp18}, In 2018, Fischler, Sprang, and Zudilin \cite{FSZ2019} proved for all $\varepsilon >0$, for all odd integer $s$ which is sufficiently large with respect to $\varepsilon$, with the help of the following rational functions:
\[R_{n}^{\rm(FSZ)}(t) = D^{3Dn} n!^{s+1-3D} \frac{ \prod_{j=0}^{3Dn}(t-n+j/D)}{ \prod_{j=0}^{n}(t+j)^{s+1}},\]
the number of irrationals in the set $\big\{\zeta(3),\zeta(5),\zeta(7),\cdots,\zeta(s)\big\}$ is at least $2^{(1-\varepsilon)\frac{\log s}{\log \log s}}$. (see also \cite{FSZ18})

\smallskip

In the current work, we prove the following result:

\smallskip

\begin{theorem}\label{MainThm}
For any small $\varepsilon > 0$, for all odd integer $s$ sufficiently large with respect to $\varepsilon$, there are at least
\[ \left(c_0 - \varepsilon\right) \frac{s^{\frac{1}{2}}}{\log^{\frac{1}{2}} s} \]
many irrational numbers among $\big\{\zeta(3),\zeta(5),\zeta(7),\cdots,\zeta(s)\big\}$, where the constant
\[ c_0 = \sqrt{\frac{4\zeta(2)\zeta(3)}{\zeta(6)} \left( 1 - \log \frac{\sqrt{4e^2+1}-1}{2} \right) }  = 1.192507\ldots. \]
\end{theorem}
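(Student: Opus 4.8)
The plan is to remain within the framework of Fischler--Sprang--Zudilin \cite{FSZ2019}, but to replace their single-level function $R_n^{\mathrm{(FSZ)}}$ by a multi-level construction whose prescribed zeros are spread simultaneously over many denominators, with the number of zeros of denominator exactly $d$ proportional to $\varphi(d)$. Fix the odd integer $s$, a cutoff $N=N(s)$, a length $\lambda=\lambda(s)$, scalings $\rho_d$ and multiplicities $m_d\ge 1$ (all to be optimised), and set
\[
R_n(t)\;=\;\mathcal{N}_n\cdot
\frac{\displaystyle\prod_{d\le N}\ \prod_{\substack{1\le a\le d\\\gcd(a,d)=1}}\ \prod_{0\le j<\lambda n}\bigl(t-\rho_d n+j+\tfrac{a}{d}\bigr)^{m_d}}
{\displaystyle\prod_{j=0}^{n}(t+j)^{s+1}},
\]
where $\mathcal{N}_n$ is a product of suitable powers of $n!$, of $\operatorname{lcm}(1,\dots,n)$ and of $\operatorname{lcm}(1,\dots,N)$ (and of the individual $d$'s) chosen so that the linear forms built below have integer, or nearly integer, coefficients. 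The point of using only reduced residues is economy: a fraction $a/d$ with $\gcd(a,d)>1$ is already available at a smaller level, so level $d$ contributes only $\varphi(d)$ genuinely new zero-locations; on the other hand a polynomial over $\mathbb{Q}$ that vanishes at one $a/d$ with $\gcd(a,d)=1$ must vanish at all $\varphi(d)$ of them, so the degree cost of opening level $d$ is $\asymp\varphi(d)\lambda m_d n$. The convergence requirement $\deg(\text{num})<\deg(\text{den})-1$ thus reads $\lambda\sum_{d\le N}\varphi(d)m_d<s$, which together with the arithmetic/analytic compatibility below forces, at the optimum, $\lambda\asymp\log s$ and $N\asymp\sqrt{s/\log s}$ — the source of the final order of magnitude.

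First I would run the elimination step of Zudilin and \cite{FSZ2019} essentially verbatim: using the symmetry of $R_n(t)$ under a reflection $t\mapsto\text{const}-t$ together with partial fractions, show that for every reduced fraction $a/d$ the series $S_n(a/d):=\sum_{t\ge 1}R_n\bigl(t+\tfrac{a}{d}\bigr)$ is a $\mathbb{Q}$-linear combination of $1,\zeta(3),\zeta(5),\dots,\zeta(s)$ only — the even zeta values drop out by the reflection — with coefficients given by explicit finite sums of residues of $R_n$. The arithmetic heart of the argument is then a denominator estimate generalising Sprang's \cite[Lemma~1.4]{Sp18} from a single denominator to products over a full reduced residue system: one shows that a common denominator of the coefficients of $S_n(a/d)$ is at most $e^{(\mathfrak{a}+o(1))n}$ with an explicit $\mathfrak{a}=\mathfrak{a}(s)$. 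The prime number theorem controls the $\operatorname{lcm}$-powers, while the contribution of clearing the fractional shifts is governed by $\sum_{d\le N}\varphi(d)m_d\log d$; evaluating such sums (together with $\sum_d\varphi(d)/d$, $\sum_d\varphi(d)\log d/d$, and the like) by Mertens-type asymptotics and the usual Dirichlet-series bookkeeping is what eventually produces the ratio $\zeta(2)\zeta(3)/\zeta(6)$ in the final constant.

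In parallel I would establish the analytic side by the saddle-point (Laplace) method applied to a complex integral representation of $S_n(a/d)$: $|S_n(a/d)|=e^{-(\mathfrak{b}+o(1))n}$, where $\mathfrak{b}=\mathfrak{b}(s)$ is the value at the saddle of an explicit potential attached to the zero profile (Stirling's formula is responsible for the entrance of $e$, and hence of $e^2$, into $\mathfrak{b}$); one also needs an upper bound $e^{(\mathfrak{c}+o(1))n}$ for the coefficients and a lower bound on the rank of the family $\{S_n(a/d)\}$ — and, as in \cite{FSZ2019}, what matters for the latter is the number of distinct denominators among the fractions used, namely $N$. Feeding these data into the quantitative Nesterenko-type criterion of \cite{FSZ2019} (Fischler's refinement of \cite{Ne85}) — a family of linear forms in $1,\zeta(3),\dots,\zeta(s)$ with denominators $\le e^{\mathfrak{a}n}$, coefficients $\le e^{\mathfrak{c}n}$, absolute values $\le e^{-\mathfrak{b}n}$ and rank $\asymp N$ forces $\asymp N\bigl(1-\mathfrak{a}/\mathfrak{b}\bigr)$ (or the analogous sharp quantity) of $\zeta(3),\dots,\zeta(s)$ to be irrational — it remains to optimise over $N$, $\lambda$, the $\rho_d$ and the $m_d$. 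Passing from discrete profiles to a continuous density turns this into a concrete variational problem whose Euler--Lagrange condition collapses to $u(u+1)=e^2$, with positive root $u=\tfrac12(\sqrt{4e^2+1}-1)$; this yields the factor $1-\log u$, the $\varphi$-weighting yields the factor $4\zeta(2)\zeta(3)/\zeta(6)$, and their product is exactly $c_0^2$.

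The main obstacle is the interplay between the sharp arithmetic lemma and the optimality of the zero design. One must (i) prove the correct $\varphi$-weighted denominator bound for the multi-level products — the genuinely new arithmetic input beyond the single-level \cite[Lemma~1.4]{Sp18} — and (ii) verify that the trade-off between the arithmetic rate $\mathfrak{a}$ and the analytic rate $\mathfrak{b}$ is actually maximised by the proposed profile and not by some other placement of the zeros. This is delicate because the arithmetic and analytic contributions scale the same way in $N$ (both of size $\asymp N^2\log N$, with $\log N\sim\tfrac12\log s$), so the constant $c_0$ is sensitive to every lower-order term; by contrast, the elimination, the saddle-point asymptotics, and the prime-number-theorem and Mertens inputs, while technical, are by now routine in this circle of ideas.
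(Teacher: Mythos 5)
Your overall architecture (FSZ-style elimination, Sprang-type integrality for multi-level fractional shifts, saddle-point asymptotics, optimisation of the length parameter via $r(r+1)=e^2$) matches the paper, and the root $r_0=\tfrac12(\sqrt{4e^2+1}-1)$ with the factor $1-\log r_0$ is exactly right. But there is a genuine gap in the part of the argument that produces the constant $\tfrac{\zeta(2)\zeta(3)}{\zeta(6)}$, and it is precisely the paper's main new ingredient. You take the zero set to be the Farey fractions of order $N$, i.e.\ all reduced $a/d$ with $d\leqslant N$, and assert that Mertens-type evaluations of $\sum_{d\leqslant N}\varphi(d)\log d$ and its relatives yield $\zeta(2)\zeta(3)/\zeta(6)$. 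They do not: those sums produce the constant $1/\zeta(2)=6/\pi^2$ (one has $\sum_{d\leqslant N}\varphi(d)\sim N^2/(2\zeta(2))$ and $\sum_{d\leqslant N}\varphi(d)\log d\sim N^2\log N/(2\zeta(2))$), and carrying your design through the same trade-off gives only $\sqrt{4\zeta(2)\,(1-\log r_0)}\approx 1.097$ in place of $c_0\approx 1.1925$. The correct design selects denominators by the size of their \emph{totient}, not their size: one takes $\mathcal{\varPsi}_B=\{b:\varphi(b)\leqslant B\}$ and the zero set $\mathcal{F}_B$ of all reduced $a/b$ with $b\in\mathcal{\varPsi}_B$. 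The constant $\zeta(2)\zeta(3)/\zeta(6)=\prod_p\bigl(1+\tfrac{1}{p(p-1)}\bigr)$ then enters through the Dressler--Bateman theorem on the inverse totient problem, $|\mathcal{\varPsi}_B|\sim\tfrac{\zeta(2)\zeta(3)}{\zeta(6)}B$, while the degree and arithmetic costs are governed by $|\mathcal{F}_B|=\sum_{b\in\mathcal{\varPsi}_B}\varphi(b)\sim\tfrac12\tfrac{\zeta(2)\zeta(3)}{\zeta(6)}B^2$. The paper's Section~6 shows this is in fact optimal under your own constraints: to open $D$ distinct denominators at minimal cost $\prod_i b_i^{\varphi(b_i)}$ one must choose the $D$ integers with smallest $\varphi$-values, which is $\mathcal{\varPsi}_B$, not $\{1,\dots,N\}$. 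So your item (ii) ("verify the profile is optimal") is not a verification that would succeed for your profile; resolving it forces the switch to the totient-ordered set, and that switch is the theorem's content.

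Two secondary points. First, the elimination is not run through a quantitative Nesterenko/rank criterion: one sums $r_{n,a/b}$ over a full residue system to convert Hurwitz values into $b^i\zeta(i)$, then uses the invertibility of the generalised Vandermonde matrix $[b^j]_{b\in\mathcal{\varPsi}_B,\,j}$ to eliminate any prescribed $|\mathcal{\varPsi}_B|-1$ odd zeta values with integer weights; the count of irrationals obtained is exactly the number of distinct denominators $|\mathcal{\varPsi}_B|$ once $g(x_0)<e^{-(s+1)}$, not a quantity of the form $N(1-\mathfrak{a}/\mathfrak{b})$. Second, the extra degrees of freedom you introduce (per-level multiplicities $m_d$ and shifts $\rho_d$) are not used and would endanger the reflection symmetry $R_n(-t-n)=-R_n(t)$ needed to kill the even zeta values; the paper uses a single shift $rn$ and multiplicity one throughout.
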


\smallskip

The proof is a natural extension to the original ideas of Zudilin \cite{Zu18} and Sprang
\cite[Lemma 1.4]{Sp18}. Our main strategies are exactly the same as \cite{FSZ2019}, though an amount of small technical modifications are involved. The major new ingredient of our work is an optimal design for the rational zeros of the auxiliary rational functions, this design is in connection with the inverse totient problem.

The structure of this note is as follows: In Section \ref{Auxiliary_functions_and_linear_forms}, we introduce the auxiliary rational functions $R_n(t)$ and related linear forms. In Section \ref{Arithmetic_lemmas}, we study the arithmetic of the denominators appeared in the linear forms. In Section \ref{Analysis_lemmas}, we bound the growth of the linear forms. In section \ref{Elimination_procedure}, we prove Theorem \ref{MainThm}. Finally, in the last section \ref{Remarks}, we show that under certain constraints, the FSZ auxiliary functions constructed in this note are the most economical ones.
\bigskip

\section{Auxiliary functions and linear forms}
\label{Auxiliary_functions_and_linear_forms}

Let $r = \frac{{\rm num}{(r)}}{{\rm den}{(r)}}$ be a positive rational number, where ${\rm num}{(r)}$ and ${\rm den}{(r)}$ are the numerator and denominator of $r$ in reduced form, respectively. We refer to $r$ the Ball-Rivoal length parameter \cite{BR01}. Eventually we will take the rational number $r$ arbitrarily close to $r_0 = \frac{\sqrt{4e^2+1}-1}{2} \approx 2.26388$ in order to maximize certain quantity.

Let $s$ be a positive odd integer and $B$ be a positive real number. We will always assume that
\begin{itemize}
\item[(1)] Both $s$ and $B$ are larger than some absolute constant.
\item[(2)] $s \geqslant 10(2r+1)B^2$.
\end{itemize}
Eventually we will take $B = cs^{1/2}/\log^{1/2} s$ for some constant $c$.

\smallskip
\begin{definition}\label{SetDefinition} We define the following two sets which depend only on $B$:
\begin{enumerate}
\item[(1)] the \emph{Denominator Set}
\[ \mathcal{\varPsi}_B = \{ b \in \mathbb{N} \mid \varphi(b) \leqslant B \},\]
where $\varphi(\cdot)$ is the Euler totient function.
\item[(2)] the \emph{Zero Set}
\[ \mathcal{F}_{B} = \left\{ \frac{a}{b} \in \mathbb{Q} ~\bigg|~ b \in \mathcal{\varPsi}_B, 1 \leqslant a \leqslant b, \text{~and~} \gcd(a,b) = 1 \right\} .\]
\end{enumerate}
\end{definition}
\smallskip

The zero set $\mathcal{F}_B$ consists of the zeros in the interval $(0,1]$ of our auxiliary rational functions, and the denominator set $\mathcal{\varPsi}_B$ consists of different denominators of the zeros. We collect some properties for these two sets. The first property is known in the topic about inverse totient problem.

\smallskip
\begin{proposition}\label{SetProposition}
\begin{enumerate}
\item [(1)] The size of the set $\mathcal{\varPsi}_B$ is
\[ \left| \mathcal{\varPsi}_B \right| = \left( \frac{\zeta(2)\zeta(3)}{\zeta(6)} + o_{B \rightarrow +\infty}(1) \right) B. \]
\item [(2)] For any $b \in \mathcal{\varPsi}_B$, we have
\[ \left\{ \frac{1}{b},\frac{2}{b},\cdots,\frac{b}{b}  \right\} \subset \mathcal{F}_{B}.  \]
\item [(3)] If $B$ is larger than some absolute constant, then
\[ \left| \mathcal{F}_{B} \right| \leqslant B^2. \]
\end{enumerate}
\end{proposition}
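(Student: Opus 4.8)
I would treat the three parts separately, since they are of quite different natures.

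For Part~(1) I would either quote the classical inverse-totient asymptotic (essentially Bateman's theorem on the distribution of values of $\varphi$) or reconstruct it as follows. Writing $N(m)=\#\{b:\varphi(b)=m\}$, the Dirichlet series
\[ \sum_{m\geqslant 1}\frac{N(m)}{m^{s}}=\sum_{b\geqslant 1}\frac{1}{\varphi(b)^{s}}=\prod_{p}\left(1+(1-1/p)^{-s}\,\frac{p^{-s}}{1-p^{-s}}\right) \]
converges for $\Re s>1$ and factors as $\zeta(s)H(s)$, where $H(s)=\prod_{p}\bigl((1-p^{-s})+(1-1/p)^{-s}p^{-s}\bigr)$ is holomorphic for $\Re s>0$ and
\[ H(1)=\prod_{p}\left(1+\frac{1}{p(p-1)}\right)=\frac{\zeta(2)\zeta(3)}{\zeta(6)}. \]
Since $N(m)\geqslant 0$ and $F(s):=\sum_m N(m)m^{-s}$ satisfies $F(s)-\tfrac{H(1)}{s-1}$ extending continuously to $\Re s\geqslant 1$, a Tauberian theorem of Wiener--Ikehara type gives $\lvert\mathcal{\varPsi}_{B}\rvert=\sum_{m\leqslant B}N(m)=(H(1)+o(1))B$. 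The two bookkeeping computations — that the local factor of the product equals $1+\tfrac1{p(p-1)}$ at $s=1$, and that $\prod_p(1+\tfrac1{p(p-1)})=\zeta(2)\zeta(3)/\zeta(6)$ — are short and direct. In the note itself it should suffice to cite this.

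Part~(2) is immediate from the standard fact that $b'\mid b$ implies $\varphi(b')\mid\varphi(b)$, hence $\varphi(b')\leqslant\varphi(b)$. Given $b\in\mathcal{\varPsi}_{B}$ and an integer $a$ with $1\leqslant a\leqslant b$, put $d=\gcd(a,b)$; then $a/b=(a/d)/(b/d)$ is in lowest terms with $1\leqslant a/d\leqslant b/d$, and since $(b/d)\mid b$ we get $\varphi(b/d)\leqslant\varphi(b)\leqslant B$, i.e.\ $b/d\in\mathcal{\varPsi}_{B}$. By the definition of $\mathcal{F}_{B}$ this says $a/b\in\mathcal{F}_{B}$ (the case $a=b$ giving $1=1/1$).

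For Part~(3), counting the elements of $\mathcal{F}_{B}$ by their reduced denominator gives the exact identity
\[ \lvert\mathcal{F}_{B}\rvert=\sum_{b\in\mathcal{\varPsi}_{B}}\varphi(b)=\sum_{\varphi(b)\leqslant B}\varphi(b). \]
Here the crude termwise bound $\varphi(b)\leqslant B$ only yields $\lvert\mathcal{F}_{B}\rvert\leqslant B\lvert\mathcal{\varPsi}_{B}\rvert=(\zeta(2)\zeta(3)/\zeta(6)+o(1))B^{2}$, which exceeds $B^{2}$; the point is that a typical $b\in\mathcal{\varPsi}_{B}$ has $\varphi(b)$ much smaller than $B$, and this must be exploited. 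I would make it quantitative by Abel summation against $V(t):=\lvert\mathcal{\varPsi}_{t}\rvert=\#\{b:\varphi(b)\leqslant t\}$:
\[ \sum_{\varphi(b)\leqslant B}\varphi(b)=B\,V(B)-\int_{0}^{B}V(t)\,dt . \]
By Part~(1), $V(t)=(A+o(1))t$ with $A=\zeta(2)\zeta(3)/\zeta(6)$, and, using the trivial bound $V(t)\ll t^{2}$ (from $\varphi(n)\gg\sqrt{n}$) to absorb the contribution of bounded $t$, one gets $\int_{0}^{B}V(t)\,dt=(\tfrac{A}{2}+o(1))B^{2}$. Hence
\[ \lvert\mathcal{F}_{B}\rvert=\Bigl(A-\tfrac{A}{2}+o(1)\Bigr)B^{2}=\Bigl(\tfrac{A}{2}+o(1)\Bigr)B^{2}, \]
and since $\tfrac{A}{2}=\dfrac{\zeta(2)\zeta(3)}{2\zeta(6)}<1$ (numerically $\approx 0.972$), we obtain $\lvert\mathcal{F}_{B}\rvert\leqslant B^{2}$ once $B$ exceeds an absolute constant.

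The only genuinely non-elementary ingredient is Part~(1) (and even there the attendant upper bound $V(t)\ll t^{2}$ is trivial); once it is in hand, Parts~(2) and (3) are formal. The one thing that really must be gotten right in (3) is the observation that the one-term estimate $\varphi(b)\leqslant B$ is insufficient and that the factor-$\tfrac12$ saving from partial summation, together with the inequality $\zeta(2)\zeta(3)/\zeta(6)<2$, is exactly what pushes the count below $B^{2}$.
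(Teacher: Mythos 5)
Your proposal is correct and takes essentially the same route as the paper: part (1) is handled by the cited Dressler--Bateman asymptotic, part (2) by the observation that divisors of elements of $\mathcal{\varPsi}_B$ stay in $\mathcal{\varPsi}_B$, and part (3) by the identity $|\mathcal{F}_B|=\sum_{b\in\mathcal{\varPsi}_B}\varphi(b)$ together with partial summation and the numerical fact $\zeta(2)\zeta(3)/\zeta(6)<2$. The extra detail you supply (the Euler-product evaluation of the constant and the factor-$\tfrac12$ saving) matches what the paper leaves implicit.
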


\begin{proof}
For the first proposition, we refer the readers to \cite{Dr70} or \cite{Ba72}. Since $|\mathcal{F}_{B}|$ $=$ $\sum_{b \in \mathcal{\varPsi}_B} \varphi(b)$, a summation by parts argument gives $|\mathcal{F}_{B}| =\left(\frac{1}{2}\frac{\zeta(2)\zeta(3)}{\zeta(6)}+o(1)\right)B^2$. Now, $\frac{\zeta(2)\zeta(3)}{\zeta(6)} = 1.94\ldots < 2$, the third proposition follows. For the second proposition, note that if $b \in \mathcal{\varPsi}_B$ and $b'$ is any divisor of $b$, then $\varphi(b') \leqslant \varphi(b) \leqslant B$, so $b' \in \mathcal{\varPsi}_B$. Therefore, for any $k \in \{ 1,2,\cdots,b \}$, we have $\frac{k}{b} = \frac{k/\gcd(k,b)}{b/\gcd(k,b)} \in \mathcal{F}_{B}$.
\end{proof}
\smallskip

We define the integer
\begin{equation}\label{P}
P_{B,{\rm den}(r)} = 2{\rm den}(r) \cdot \text{LCM}_{\substack{b \in \mathcal{\varPsi}_B \\ p \mid b}}\{ p-1 \} ,
\end{equation}
where $\text{LCM}$ means taking the least common multiple. As a convention, the letter $p$ always denotes prime numbers.

For given $r$, $s$ and $B$, we define the following auxiliary rational functions.

\smallskip
\begin{definition}[FSZ constructions]\label{auxiliary_function}
For any positive integer $n$ which is a multiple of $P_{B,{\rm den}(r)}$, we define the rational function
\[R_n(t) = A_1(B)^n A_2(B)^n \frac{n!^{s+1}}{\left( \frac{n}{{\rm den}(r)} \right)!~^{{\rm den}(r)(2r+1)|\mathcal{F}_B|}} \frac{(t-rn)\prod_{\theta \in \mathcal{F}_{B}} \prod_{j=0}^{(2r+1)n-1} \left(  t-rn+j+\theta \right) }{\prod_{j=0}^{n}(t+j)^{s+1}},\]
where
\[ A_1(B) = \prod_{b \in \mathcal{\varPsi}_B} b^{(2r+1)\varphi(b)}, \]
we refer to $A_1(B)^n$ the \emph{major arithmetic (wasting) factor}, and
\[ A_2(B) = \prod_{b \in \mathcal{\varPsi}_B} \prod_{p \mid b} p^{\frac{(2r+1)\varphi(b)}{p-1}} ,\]
we refer to $A_2(B)^n$ the \emph{minor arithmetic (wasting) factor}.
\end{definition}
\smallskip

Notice that by \eqref{P}, both $A_1(B)^n$ and $A_2(B)^n$ are integers, also, $\frac{n}{{\rm den}(r)}$, $rn$, and $(2r+1)n$ are integers. In the following lemma we estimate $A_1(B)$ and $A_2(B)$:

\smallskip
\begin{lemma}\label{A1A2}
We have
\[ A_1(B) = \exp\left( \left( \frac{1}{2} \frac{\zeta(2)\zeta(3)}{\zeta(6)} + o_{B \rightarrow + \infty}(1) \right)\left(2r+1\right) B^2 \log B \right), \]
and for any $B$ larger than some absolute constant,
\[ A_2(B) \leqslant \exp\left( 10(2r+1)B^2(\log\log B)^2 \right) .\]
\end{lemma}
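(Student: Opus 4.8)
The plan is to pass to logarithms and reduce both estimates to elementary sums over $\varPsi_B$, using only Mertens' theorems, the classical bound $n/\varphi(n)\ll\log\log n$, and the counting input from the proof of Proposition~\ref{SetProposition}, namely $\sum_{b\in\varPsi_B}\varphi(b)=\left|\mathcal{F}_B\right|=\big(\tfrac12\tfrac{\zeta(2)\zeta(3)}{\zeta(6)}+o_{B\to\infty}(1)\big)B^2$ (for the $A_2$ bound only $\left|\mathcal{F}_B\right|\le B^2$ is needed). Throughout, $r$ is fixed, so the factor $2r+1$ simply comes out in front.

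For $A_1(B)$, taking logarithms gives $\log A_1(B)=(2r+1)\sum_{b\in\varPsi_B}\varphi(b)\log b$. For the upper bound I would observe that every $b\in\varPsi_B$ has $\varphi(b)\le B$, so by $n/\varphi(n)\ll\log\log n$ we get $b\ll B\log\log b$, and after an easy bootstrap $b\ll B\log\log B$ uniformly on $\varPsi_B$; hence $\log b\le(1+o(1))\log B$ uniformly, and $\sum_{b\in\varPsi_B}\varphi(b)\log b\le(1+o(1))(\log B)\sum_{b\in\varPsi_B}\varphi(b)$. For the matching lower bound, fix $\delta>0$: the terms with $b<B^{1-\delta}$ contribute at most $\sum_{b<B^{1-\delta}}b\le B^{2(1-\delta)}=o(B^2)$ to $\sum_{b\in\varPsi_B}\varphi(b)$, so the remaining terms, all of which have $\log b\ge(1-\delta)\log B$, still sum to $\big(\tfrac12\tfrac{\zeta(2)\zeta(3)}{\zeta(6)}+o(1)\big)B^2$; thus $\liminf_{B\to\infty}\big(\sum_{b\in\varPsi_B}\varphi(b)\log b\big)\big/(B^2\log B)\ge(1-\delta)\cdot\tfrac12\tfrac{\zeta(2)\zeta(3)}{\zeta(6)}$ for every $\delta>0$, and letting $\delta\to0$ closes the gap with the upper bound.

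For $A_2(B)$, we have $\log A_2(B)=(2r+1)\sum_{b\in\varPsi_B}\varphi(b)\,S(b)$ with $S(b):=\sum_{p\mid b}\tfrac{\log p}{p-1}$, and since $\sum_{b\in\varPsi_B}\varphi(b)=\left|\mathcal{F}_B\right|\le B^2$ it suffices to show $S(b)\le 4\log\log B$ for all $b\in\varPsi_B$ once $B$ is large, because then $\log A_2(B)\le 4(2r+1)B^2\log\log B\le 10(2r+1)B^2(\log\log B)^2$. The key remark is that $b\in\varPsi_B$ forces $\prod_{p\mid b}(p-1)\le\varphi(b)\le B$. Split the primes $p\mid b$ at the threshold $(\log B)^2$: the "large" ones ($p>(\log B)^2$) number at most $\log B/\log((\log B)^2-1)\ll\log B/\log\log B$ (since each contributes a factor $>(\log B)^2-1$ to a product $\le B$), and each contributes $\tfrac{\log p}{p-1}\ll\log\log B/(\log B)^2$, so altogether $O(1/\log B)$; the "small" ones contribute at most $\sum_{p\le(\log B)^2}\tfrac{\log p}{p-1}=2\log\log B+O(1)$ by Mertens' theorem, the discrepancy $\sum_p\tfrac{\log p}{p(p-1)}$ being a convergent series. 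Adding, $S(b)\le 3\log\log B+O(1)\le 4\log\log B$ for $B$ large.

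Nothing here is deep; the only places needing care are the uniformity in the $A_1$ upper bound (one must know that $b=B^{1+o(1)}$, not merely that there are $\asymp B$ values of $b$) and the order of limits in the $A_1$ lower bound (discard small $b$ first, convert $\log b$ to $(1-\delta)\log B$, and only then send $\delta\to 0$). The generous constants $10$ and $(\log\log B)^2$ in the $A_2$ estimate leave ample slack, reflecting that $A_2(B)=\exp\!\big(O(B^2\log\log B)\big)$ is genuinely of lower order than the main factor $A_1(B)=\exp\!\big(\Theta(B^2\log B)\big)$.
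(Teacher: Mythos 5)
Your proof is correct, and while the $A_1$ upper bound is essentially the paper's argument (deduce $b\ll B\log\log B$ from $\varphi(b)\le B$ and $\varphi(m)\gg m/\log\log m$, so $\log b\le(1+o(1))\log B$ uniformly, then multiply by $\sum_{b\in\mathcal{\varPsi}_B}\varphi(b)$), you diverge from the paper in two places. For the $A_1$ lower bound the paper uses $\log b\ge\log\varphi(b)$ and evaluates $\sum_{b\in\mathcal{\varPsi}_B}\varphi(b)\log\varphi(b)$ by partial summation against $|\mathcal{\varPsi}_x|=(\tfrac{\zeta(2)\zeta(3)}{\zeta(6)}+o(1))x$; your truncation at $B^{1-\delta}$ followed by $\delta\to0$ is an equally valid route resting on the same counting input, just packaged as an $\varepsilon$-argument instead of an integration by parts. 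For $A_2$ the paper interchanges the order of summation, bounds $\sum_{b\le 2B\log\log B,\,p\mid b}\varphi(b)\le 4B^2(\log\log B)^2/p$, and invokes convergence of $\sum_p\frac{\log p}{p(p-1)}$; you instead bound $S(b)=\sum_{p\mid b}\frac{\log p}{p-1}$ uniformly over $b\in\mathcal{\varPsi}_B$ using $\prod_{p\mid b}(p-1)\le\varphi(b)\le B$ and Mertens' theorem. Your version actually yields the sharper $\log A_2(B)=O\bigl(B^2\log\log B\bigr)$, of which the stated $(\log\log B)^2$ bound is a weak consequence; since $A_2$ only needs to be negligible against $A_1(B)=\exp\bigl(\Theta(B^2\log B)\bigr)$, both versions are ample for the application.
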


\begin{proof}
We start by $\log A_1(B) = (2r+1)\sum_{b \in \mathcal{\varPsi}_B} \varphi(b)\log b$. Firstly,
\begin{align*}
\log A_1(B) &\geqslant (2r+1)\sum_{b \in \mathcal{\varPsi}_B} \varphi(b)\log \varphi(b) \\
            &= (2r+1) \int_{1^{-}}^{B} x\log x ~{\rm d} | \mathcal{\varPsi}_x|,
\end{align*}
an integration by parts argument with the fact $| \mathcal{\varPsi}_x| = \left(\frac{\zeta(2)\zeta(3)}{\zeta(6)}+o_{x \rightarrow +\infty}(1)\right) x$ (see Proposition \ref{SetProposition} $(1)$) gives $\log A_1(B) \geqslant \left(2r+1\right)\left( \frac{1}{2} \frac{\zeta(2)\zeta(3)}{\zeta(6)} + o_{B \rightarrow + \infty}(1) \right) B^2 \log B$. On the other hand, it is well known (see, for instance, \cite[Thm 2.9]{MV06}) that
\[ \varphi(m) \geqslant \left( e^{-\gamma} + o_{m \rightarrow +\infty}(1) \right)\frac{m}{\log\log m}, \]
where $\gamma = 0.577\ldots$ is Euler's constant. For any $b \in \mathcal{\varPsi}_B$, since $\varphi(b) \leqslant B$, we derive that
\begin{equation} \label{b}
b \leqslant ( e^{\gamma}+o_{B \rightarrow +\infty}(1) ) B \log\log B,
\end{equation}
thus $\log A_1(B) \leqslant (2r+1)(1+o_{B\rightarrow +\infty}(1))\log B \sum_{b \in \mathcal{\varPsi}_B} \varphi(b)$, a summation by parts argument as above gives $\log A_1(B) \leqslant \left(2r+1\right)\left( \frac{1}{2} \frac{\zeta(2)\zeta(3)}{\zeta(6)} + o_{B \rightarrow + \infty}(1) \right) B^2 \log B$. Combining the two parts we obtain the estimate for $A_1(B)$.

Now, for $A_2(B)$, by \eqref{b} and $e^{\gamma} = 1.78\ldots < 2$, when $B$ is larger than some absolute constant, we have
\begin{align*}
\log A_2(B) &\leq (2r+1) \sum_{b \leqslant 2B\log\log B} \varphi(b) \sum_{p \mid b} \frac{\log p}{p-1} \\
&= (2r+1) \sum_{p \leqslant 2B\log\log B} \frac{\log p}{p-1} \sum_{\substack{b \leqslant 2B\log\log B \\ p \mid b}} \varphi(b)
\end{align*}
Since $\sum_{\substack{b \leqslant 2B\log\log B \\ p \mid b}} \varphi(b) \leqslant \sum_{\substack{b \leqslant 2B\log\log B \\ p \mid b}} b \leqslant \frac{4B^2 (\log\log B)^2}{p}$, it implies that
\[ \log A_2(B) \leqslant 4(2r+1) B^2 (\log\log B)^2 \sum_{p} \frac{\log p}{p(p-1)}  ,\]
the estimate for $A_2(B)$ follows.
\end{proof}
\smallskip

We proceed to construct linear forms in Hurwitz zeta values. Since the numerator and denominator of $R_{n}(t)$ have a common factor $\prod_{j=0}^{n} (t+j)$, it can be rewritten as $R_{n}(t) = \frac{Q_{n}(t)}{\prod_{j=0}^{n} (t+j)^{s}}$, where $Q_{n}(t)$ is a polynomial in $t$ with rational coefficients. Since $\deg R_n <0$ (see below), we know that $R_n(t)$ has a (unique) partial fraction expansion
\begin{equation}\label{a_ik}
R_n(t) = \sum_{i=1}^{s}\sum_{k=0}^{n} \frac{a_{i,k}}{(t+k)^i}
\end{equation}
with coefficients $a_{i,k} \in \mathbb{Q}$. Note that these coefficients $a_{i,k}$ also depend on $n$, $r$, $s$, and $B$.

We list two properties of $R_n(t)$ and $a_{i,k}$ which will be used later in Lemma \ref{LemmaLin}:
\begin{itemize}
\item[(1)] As a rational function, the degree of $R_{n}(t)$ is
\[ \deg R_n = 1 + (2r+1)|\mathcal{F}_B|n - (s+1)(n+1)  \leqslant -2.\]
This is due to $|\mathcal{F}_B| \leqslant B^2$ and $s \geqslant 10(2r+1)B^2$.
\item[(2)] The auxiliary function $R_{n}(t)$ has the following symmetry:
\[R_{n}(-t-n) = - R_{n}(t).\]
In view of the fact that $(2r+1)n$ is even and $s$ is odd, the proof is a straightforward computation. In particular, since the partial fractional expansion for $R_n(t)$ is unique, we derive that
\[(-1)^i a_{i,k} =- a_{i,n-k},\]
for all $1\leqslant i\leqslant s$, $0 \leqslant k \leqslant n$.
\end{itemize}

For all $\theta \in \mathcal{F}_B$, we define the following quantities:
\begin{equation}\label{r_n_theta}
r_{n,\theta} = \sum_{m=1}^{\infty} R_{n}(m+\theta).
\end{equation}
The notations $r_{n,\theta}$ are adopted to keep pace with those in \cite{FSZ2019}. There is no risk to be confused with the Ball-Rivoal length parameter $r$.

We recall the definition of the Hurwitz zeta values:
\[\zeta(i,\alpha)  = \sum_{m=0}^{\infty} \frac{1}{(m+\alpha)^i},\]
where $i\geqslant 2$ is an integer and $\alpha$ is a positive real number.

The following lemma is the same as \cite[Lemma 1]{FSZ2019} (for a proof, see therein):

\smallskip
\begin{lemma}[linear forms]\label{LemmaLin} For all $\theta \in \mathcal{F}_B$, we have
\[r_{n,\theta} = \rho_{0,\theta} + \sum_{\substack{3 \leqslant i \leqslant s \\ i ~ {\rm odd}}} \rho_{i} \zeta\left(i,\theta \right),\]
where the rational coefficient
\[\rho_{i} = \sum_{k=0}^{n} a_{i,k} \quad \text{for $3 \leqslant i \leqslant s$, $i$ odd,}\]
does not depend on $\theta \in \mathcal{F}_B$, and
\[\rho_{0,\theta} = - \sum_{k=0}^{n}\sum_{\ell=0}^{k}\sum_{i=1}^{s} \frac{a_{i,k}}{\left( \ell+\theta \right)^i}.\]
\end{lemma}
\bigskip

\section{Arithmetic lemmas}
\label{Arithmetic_lemmas}

The following proposition is elementary, we omit the proof:

\smallskip
\begin{proposition}\label{consecutive_prod}
Let $L \in \mathbb{N} \cup \{ 0 \}$. Suppose $x_1,x_2,\cdots,x_L$ be any $L$ consecutive terms in an integer arithmetic progression with common difference $b \in \mathbb{N}$, then for any prime $q \nmid b$, we have
\[ v_q(x_1x_2\cdots x_L) \geq \sum_{i=1}^{\infty} \left\lfloor \frac{L}{q^i} \right\rfloor .\]
In the degenerate case of $L=0$, we view $x_1x_2\cdots x_L = 1$.
\end{proposition}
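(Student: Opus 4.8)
The plan is to reduce everything to one elementary counting fact: in any window of $L$ consecutive integers, a fixed residue class modulo $d$ is represented at least $\lfloor L/d \rfloor$ times. Applying this with $d = q^i$ for each $i \geqslant 1$ will produce the asserted lower bound on $v_q(x_1 \cdots x_L)$ after summing over $i$.

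Concretely, I would first dispose of the degenerate case $L = 0$ (empty product $=1$, right-hand side $=0$) and, more generally, the case where some $x_m = 0$ (then $v_q$ of the product is $+\infty$ and the inequality is trivial, since its right-hand side is a finite sum). So assume $L \geqslant 1$ and all $x_m \neq 0$, and write $x_m = x_1 + (m-1)b$ for $1 \leqslant m \leqslant L$. Using $v_q(x_m) = \#\{\, i \geqslant 1 : q^i \mid x_m \,\}$ and interchanging the two sums,
\[ v_q(x_1 x_2 \cdots x_L) \;=\; \sum_{m=1}^{L} v_q(x_m) \;=\; \sum_{i=1}^{\infty} \#\{\, 1 \leqslant m \leqslant L : q^i \mid x_m \,\}, \]
where the sum over $i$ is in fact finite, terminating once $q^i$ exceeds $\max_m |x_m|$.

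It therefore suffices to show $\#\{\, 1 \leqslant m \leqslant L : q^i \mid x_m \,\} \geqslant \lfloor L/q^i \rfloor$ for every $i \geqslant 1$. Since $q \nmid b$, the residue of $b$ is invertible modulo $q^i$, so the congruence $x_1 + (m-1)b \equiv 0 \pmod{q^i}$ pins down $m$ uniquely modulo $q^i$; hence the count on the left equals the number of integers of $\{1, \dots, L\}$ lying in one fixed residue class mod $q^i$. Writing $L = q^i \lfloor L/q^i \rfloor + \rho$ with $0 \leqslant \rho < q^i$, the first $q^i \lfloor L/q^i \rfloor$ of these integers already meet that class exactly $\lfloor L/q^i \rfloor$ times, which gives the bound. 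Summing over $i \geqslant 1$ completes the proof. There is no substantial obstacle here; the only point demanding (minimal) care is the invertibility of $b$ modulo each prime power $q^i$, which is precisely where the hypothesis $q \nmid b$ enters.
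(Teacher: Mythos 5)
Your proof is correct and complete. The paper itself omits the proof of this proposition (it is declared elementary), and your argument --- reducing $v_q(x_1\cdots x_L)=\sum_{i\geqslant 1}\#\{m: q^i\mid x_m\}$ and then using invertibility of $b$ modulo $q^i$ to see that the divisible indices form a single residue class, hence occur at least $\lfloor L/q^i\rfloor$ times among $L$ consecutive integers --- is exactly the standard argument the authors had in mind, with the degenerate cases ($L=0$, some $x_m=0$) properly handled.
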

\smallskip

For any $\frac{a}{b} \in \mathcal{F}_{B}$ with $\gcd(a,b)=1$, we define the following polynomials:
\begin{align}
F_{b,a}(t) &= \frac{\prod_{p \mid b} p^{\frac{(2r+1)n}{p-1}}}{\left( \frac{n}{{\rm den}(r)} \right)!~^{{\rm den}(r)(2r+1)}}\cdot b^{(2r+1)n} \prod_{j=0}^{(2r+1)n -1} \left( t-rn + j + \frac{a}{b} \right) \nonumber \\
&= \frac{\prod_{p \mid b} p^{\frac{(2r+1)n}{p-1}}}{\left( \frac{n}{{\rm den}(r)} \right)!~^{{\rm den}(r)(2r+1)}} \prod_{j=0}^{(2r+1)n -1} \left( bt-brn + a + bj \right). \label{F_ba}
\end{align}
Then we define
\[ \widetilde{F}_{b,a}(t) = \begin{cases} F_{b,a}(t) &\text{if~} \frac{a}{b} \neq 1, \\ (t-rn)F_{1,1}(t) &\text{if~} \frac{a}{b} = 1. \end{cases} \]
Notice that since $n \in P_{B,{\rm den}(r)}\mathbb{N}$, by \eqref{P}, all of $\frac{(2r+1)n}{p-1}$, $\frac{n}{{\rm den}(r)}$, $rn$, and $(2r+1)n$ are integers. By Definition \ref{auxiliary_function}, we have
\begin{equation}\label{R_equalsto_FFF}
R_n(t) = n!^{s+1} \frac{\prod_{\frac{a}{b} \in \mathcal{F}_{B}}\widetilde{F}_{b,a}(t)}{\prod_{j=0}^{n}(t+j)^{s+1}}.
\end{equation}

For a formal series $U(t) = \sum_{\ell=0}^{\infty} u_{\ell} t^{\ell} \in \mathbb{Q}[[t]]$, we denote by $[t^{\ell}](U(t))$ the $\ell$-th coefficient of $U(t)$, i.e., $[t^{\ell}](U(t)) = u_{\ell}$.

As usual, we denote by $d_n=\text{LCM}\{1,2,\cdots,n\}$ the least common multiple of the first $n$ positive integers. By the prime number theorem, we have $\lim_{n \rightarrow +\infty} d_n^{1/n} = e$. We first establish the following arithmetic property of $\widetilde{F}_{b,a}(t)$:

\smallskip
\begin{proposition}\label{AriF}
For any nonnegative integers $\ell$ and $k$, we have
\[d_{n}^{\ell} \cdot [t^{\ell}](\widetilde{F}_{b,a}(t-k)) \in \mathbb{Z} \]
\end{proposition}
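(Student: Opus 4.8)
The plan is to prove the integrality of the coefficients of $\widetilde{F}_{b,a}(t-k)$ by first reducing to the case of $F_{b,a}$ (the extra factor $(t-rn)$ when $a/b=1$ is a monic linear polynomial with integer coefficients, and multiplying by it does not hurt, since the denominators $d_n^\ell$ only grow with $\ell$). So it suffices to show $d_n^\ell \cdot [t^\ell]\big(F_{b,a}(t-k)\big) \in \mathbb{Z}$ for all $\ell, k \geqslant 0$. Writing $m = \tfrac{n}{{\rm den}(r)}$, recall from \eqref{F_ba} that
\[
F_{b,a}(t) = \frac{\prod_{p \mid b} p^{\frac{(2r+1)n}{p-1}}}{(m!)^{{\rm den}(r)(2r+1)}} \prod_{j=0}^{(2r+1)n-1}\big(bt - brn + a + bj\big),
\]
so $F_{b,a}(t-k)$ is the same product with $bt$ replaced by $b(t-k)$, i.e. the $j$-th factor is $bt - b(rn+k) + a + bj$. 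Expanding the product in powers of $t$, the coefficient of $t^\ell$ is $b^\ell$ times the $(N-\ell)$-th elementary symmetric function (up to sign) of the $N = (2r+1)n$ integers $\{-b(rn+k) + a + bj : 0 \leqslant j \leqslant N-1\}$, all divided by the normalizing constant.

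The key step is the arithmetic estimate. Fix a prime $q$. The argument splits according to whether $q \mid b$. If $q \nmid b$, the $N$ integers above form $N$ consecutive terms of an arithmetic progression with common difference $b$, so by Proposition \ref{consecutive_prod}, any sub-product of $L$ of the $N$ factors that are themselves *consecutive* is divisible by $q^{\sum_i \lfloor L/q^i\rfloor}$; more carefully, what I actually need is a bound on $v_q$ of each elementary symmetric function $e_{N-\ell}$, and the standard device is that the partial-fraction/Nörlund-type estimate reduces everything to bounding $v_q\big(\prod_{j} (\cdots + bj)\big)$ over a full range versus the factor $d_n^\ell$. Concretely: the product of all $N$ factors has $v_q \geqslant \sum_i \lfloor N/q^i \rfloor \geqslant v_q\big((m!)^{{\rm den}(r)(2r+1)}\big)$ since $N = {\rm den}(r)(2r+1)\cdot m$ and $v_q(m!) = \sum_i \lfloor m/q^i \rfloor \leqslant \sum_i \lfloor N/q^i \rfloor / ({\rm den}(r)(2r+1))$ fails to be term-wise but sums correctly via $\lfloor ab \rfloor$-type inequalities — so the whole product over $j$ absorbs the $m!$ denominator. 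To handle a proper elementary symmetric function $e_{N-\ell}$ (a sum of products of $N-\ell$ of the factors), one uses that $e_{N-\ell} = \sum (\text{product of } N-\ell \text{ factors})$, and each such product, being "all but $\ell$" of an arithmetic progression, has $v_q$ at least $\sum_i \lfloor N/q^i\rfloor - \ell \cdot v_q(d_n)$ away from full — this is where the $d_n^\ell$ compensation enters, since $v_q(d_n) = \lfloor \log n / \log q\rfloor \geqslant$ the number of times deleting $\ell$ terms can drop the $q$-valuation. If $q \mid b$ (so $q \nmid a$ since $\gcd(a,b)=1$), then the common difference $b$ is divisible by $q$, and each factor $b(t - rn - k) + a + bj \equiv a \pmod q$ contributes $v_q = 0$ from the progression itself; instead the compensating divisibility comes entirely from the prefactor $\prod_{p\mid b} p^{(2r+1)n/(p-1)}$, whose $q$-part $q^{(2r+1)n/(q-1)}$ must be shown to dominate $v_q\big((m!)^{{\rm den}(r)(2r+1)}\big)$; this follows from $v_q(m!) \leqslant \tfrac{m}{q-1}$ (Legendre) and $N = {\rm den}(r)(2r+1)m$.

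The main obstacle I anticipate is the bookkeeping in the $q \nmid b$ case: turning the "consecutive product" bound of Proposition \ref{consecutive_prod} into a clean bound on an arbitrary elementary symmetric function, while simultaneously tracking the $d_n^\ell$ factor and the $m!$ normalization, without losing constants. The cleanest route is probably to write $F_{b,a}(t-k) = \tfrac{\prod_{p\mid b}p^{(2r+1)n/(p-1)}}{(m!)^{{\rm den}(r)(2r+1)}} \cdot b^{N} \prod_{j=0}^{N-1}\big(t - (rn+k) + \tfrac{a}{b} + j\big)$ and expand in the falling-factorial / binomial-coefficient basis rather than the monomial basis, or to localize at $q$ and argue that each coefficient, times $d_n^\ell$, is a $q$-adic integer by a direct induction on $\ell$ using the recursion $e_{N-\ell}(\text{shifted set}) $ vs. $e_{N-\ell}$ of the un-shifted set. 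I would present it via the $q$-adic valuation inequality $v_q\big(d_n^\ell [t^\ell](\widetilde{F}_{b,a}(t-k))\big) \geqslant 0$ checked separately for $q \mid b$ and $q \nmid b$, citing Proposition \ref{consecutive_prod} and Legendre's formula, and absorbing the $b^\ell$ factor trivially (it is an integer).
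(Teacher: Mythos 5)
Your strategy is the paper's: reduce to $F_{b,a}$, verify $v_q\geqslant 0$ of $d_n^{\ell}\,[t^{\ell}](F_{b,a}(t-k))$ prime by prime, use Legendre's formula to show the prefactor $\prod_{p\mid b}p^{(2r+1)n/(p-1)}\big/(m!)^{{\rm den}(r)(2r+1)}$ (with $m=n/{\rm den}(r)$) has nonnegative $q$-adic valuation when $q\mid b$, and for $q\nmid b$ expand $[t^{\ell}]$ of the product as a sum of sub-products over ``all but $\ell$'' of the factors, each sub-product being a union of at most $\ell+1$ blocks of consecutive terms of the progression, to which Proposition \ref{consecutive_prod} applies. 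All of this matches the paper's proof.

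The one justification that would not survive as written is your explanation of the loss $\ell\,v_q(d_n)$ in the $q\nmid b$ case: ``$v_q(d_n)\geqslant$ the number of times deleting $\ell$ terms can drop the $q$-valuation'' is not a valid bound, because a single deleted factor $-b(rn+k)+a+bj$ can be divisible by an arbitrarily large power of $q$ (these integers are of size comparable to $b(2r+1)n+bk$, not bounded by $n$), so the loss cannot be controlled per deleted factor. It must be controlled per $q$-power level. With $N=(2r+1)n$ and block lengths $L_1,\dots,L_{\ell+1}$ summing to $N-\ell$, one shows for each fixed $i$ that $\sum_{j}\lfloor L_j/q^i\rfloor\geqslant\max\bigl(0,\lfloor N/q^i\rfloor-\ell\bigr)$ (the left side exceeds $\tfrac{N+1}{q^i}-\ell-1$ and is an integer), and then sums only over $i\leqslant\lfloor\log_q n\rfloor$: this restricted sum already dominates $v_q\bigl((m!)^{{\rm den}(r)(2r+1)}\bigr)=\sum_{i\leqslant\log_q m}{\rm den}(r)(2r+1)\lfloor m/q^i\rfloor\leqslant\sum_{i\leqslant\log_q n}\lfloor N/q^i\rfloor$ term by term, while the total loss is at most $\ell\lfloor\log_q n\rfloor=\ell\,v_q(d_n)$. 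Summing over all levels $i$ with $q^i\leqslant N$ would instead give a loss of order $\ell\,v_q(d_N)>\ell\,v_q(d_n)$; the rescue is precisely that $v_q(m!)$ is supported on $i\leqslant\log_q m\leqslant\log_q n$, so the extra levels can be discarded. This per-level counting is exactly how the paper closes the argument, and it slots into the framework you set up without further changes.
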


\begin{proof}
Note that we only need to prove the proposition with $\widetilde{F}_{b,a}$ replaced by $F_{b,a}$. If $\ell > \deg F_{b,a} =(2r+1)n$, the proposition trivially holds. In the rest of the proof, we assume $\ell \leq \deg F_{b,a}$.

For a prime $q \mid b$, the $q$-adic order of the factor $\frac{\prod_{p \mid b} p^{\frac{(2r+1)n}{p-1}}}{\left( \frac{n}{{\rm den}(r)} \right)!~^{{\rm den}(r)(2r+1)}}$ is nonnegative, so by \eqref{F_ba}, the $q$-adic order of every coefficient of $F_{b,a}(t-k)$ is nonnegative. Therefore, for any prime $q \mid b$, we have $v_q( d_{n}^{\ell} \cdot [t^{\ell}](F_{b,a}(t-k))) \geqslant 0$.

Now consider a prime $q \nmid b$. Notice that $[t^{\ell}]\left(\prod_{j=0}^{(2r+1)n -1} \left( b(t-k)-brn + a + bj \right)\right)$ is a sum of finitely many terms all of the form
\begin{equation}\label{piecewise_consecutive_prod}
 b^{\ell} \prod_{i=1}^{\ell + 1}\prod_{j \in J_i} \left( -bk-brn + a + bj \right),
\end{equation}
where $J_i$ is a set consisting of $L_i \in \mathbb{N} \cup \{ 0 \}$ consecutive integers such that $L_1 + L_2 + \cdots + L_{\ell + 1} = (2r+1)n - \ell $. By Proposition \ref{consecutive_prod}, we derive that the $q$-adic order of the expression \eqref{piecewise_consecutive_prod} is
\[ v_q(\eqref{piecewise_consecutive_prod}) \geqslant  \sum_{i=1}^{\infty} \sum_{j=1}^{\ell + 1} \left\lfloor \frac{L_j}{q^i} \right\rfloor. \]
For a fixed $i \geqslant 1$, we have $\sum_{j=1}^{\ell + 1} \left\lfloor \frac{L_j}{q^i} \right\rfloor \geqslant \sum_{j=1}^{\ell + 1} \frac{L_j - (q^i - 1)}{q^i} = \frac{(2r+1)n+1}{q^i} - \ell - 1 > \left\lfloor \frac{(2r+1)n}{q^i} \right\rfloor - \ell - 1$, but the left hand side is a nonnegative integer, so we obtain that $\sum_{j=1}^{\ell + 1} \left\lfloor \frac{L_j}{q^i} \right\rfloor \geqslant \max(0, \left\lfloor \frac{(2r+1)n}{q^i} \right\rfloor - \ell)$. Therefore,
\begin{align}
v_q(\eqref{piecewise_consecutive_prod}) &\geqslant \sum_{i=1}^{\lfloor \log_q n \rfloor} \left( \left\lfloor \frac{(2r+1)n}{q^i} \right\rfloor - \ell \right) \nonumber \\
&\geqslant \sum_{i=1}^{\lfloor \log_q n \rfloor} \left( {\rm den}(r)(2r+1) \left\lfloor \frac{n/{\rm den}(r)}{q^i} \right\rfloor - \ell \right) \label{v_q} \\
&= v_q\left( \left( \frac{n}{{\rm den}(r)} \right)!~^{{\rm den}(r)(2r+1)} \right) - \ell v_q(d_n). \nonumber
\end{align}
(The non-trivial part is for cases $q\leqslant n$, for $q>n$, the above derivation is also valid but degenerates to trivial results.) In conclusion, for any prime $q \nmid b$, by equation \eqref{F_ba} and equality \eqref{v_q}, we find that $d_{n}^{\ell} \cdot [t^{\ell}](F_{b,a}(t-k))$ is a sum of finitely many terms, each of these terms has nonnegative $q$-adic order, this completes the proof of Proposition \ref{AriF}.
\end{proof}

We prove the following arithmetic lemma, which corresponds to \cite[Lemma 2]{FSZ2019}. In our situation, the Ball-Rivoal length parameter $r$ is just a rational number (not necessary an integer or a half integer), so we have to modify the proof for $d_n^{s+1-i} a_{i,k} \in \mathbb{Z}$, but the rest of proof is the same as \cite[Lemma 2]{FSZ2019}.

\smallskip
\begin{lemma}[arithmetic lemma]\label{LemmaAri}
We have
\[d_n^{s+1-i} \rho_{i} \in \mathbb{Z}\]
for all odd integers $i$ with $3 \leqslant i \leqslant s$, and we have
\[d_{n+1}^{s+1} \rho_{0,\theta} \in \mathbb{Z}\]
for all $\theta \in \mathcal{F}_B$.
\end{lemma}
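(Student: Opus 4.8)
The strategy is to reduce the two integrality statements to the arithmetic input already assembled, namely Proposition \ref{AriF} on the coefficients of the shifted polynomials $\widetilde{F}_{b,a}(t-k)$. First consider $\rho_i = \sum_{k=0}^n a_{i,k}$. From the partial fraction expansion \eqref{a_ik}, the coefficient $a_{i,k}$ is obtained from $R_n(t)$ by a Taylor-type extraction at $t = -k$: writing $R_n(t) = Q_n(t)/\prod_{j=0}^n (t+j)^{s+1}$, or more conveniently starting from \eqref{R_equalsto_FFF}, one has
\[
a_{i,k} = \frac{1}{(s-i)!}\,\frac{{\rm d}^{\,s-i}}{{\rm d}t^{\,s-i}}\left[ (t+k)^{s+1} R_n(t) \right]\bigg|_{t=-k},
\]
equivalently $a_{i,k} = [(t+k)^{s-i}]\big((t+k)^{s+1}R_n(t)\big)$ evaluated as a power series around $t=-k$. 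Substituting $t \mapsto t-k$ turns this into extracting the coefficient of $t^{s-i}$ in the power series expansion at $0$ of $n!^{s+1}\,\big(\prod_{\frac ab}\widetilde F_{b,a}(t-k)\big)\,/\,\prod_{j\neq k}(t+j-k)^{s+1}$. The numerator polynomial $\prod_{\frac ab}\widetilde F_{b,a}(t-k)$ has, by Proposition \ref{AriF}, the property that $d_n^{\ell}$ clears the denominator of its degree-$\ell$ coefficient (the product of polynomials with this property retains it, since $d_n^{\ell_1}d_n^{\ell_2}\mid d_n^{\ell_1+\ell_2}$). The remaining factor $n!^{s+1}/\prod_{j\neq k}(t+j-k)^{s+1}$ must be expanded as a power series; its coefficients are integer combinations of products of $1/(j-k)$'s, and the standard fact — as in \cite{BR01}, \cite{FSZ2019} — is that $d_n^{m}$ times the coefficient of $t^m$ in $n!\big/\prod_{j\neq k,\,0\le j\le n}(t+j-k)$ is an integer (this is exactly where $\text{LCM}$ of $\{1,\dots,n\}$ enters, because the poles $j-k$ range over nonzero integers of absolute value $\le n$). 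Raising to the $(s+1)$st power and combining exponents, one gets that $d_n^{\,s-i}\,a_{i,k}\in\mathbb Z$ for each $k$, hence $d_n^{\,s-i}\rho_i\in\mathbb Z$; but the claim is the slightly stronger $d_n^{\,s+1-i}\rho_i\in\mathbb Z$, so I expect one needs the finer bookkeeping that the numerator contributes one extra factor of $d_n$ beyond its degree, or that the $n!$ normalization and the partial-fraction extraction together save one power — this is the point flagged in the text ("we have to modify the proof for $d_n^{s+1-i}a_{i,k}\in\mathbb Z$") and is the main technical obstacle, arising because $r$ is a general rational rather than an integer or half-integer, so the naive degree count for $\widetilde F_{b,a}$ is off and must be sharpened using the precise structure of \eqref{piecewise_consecutive_prod} together with the $n!^{s+1}$ prefactor.

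For the second statement, recall from Lemma \ref{LemmaLin} that
\[
\rho_{0,\theta} = -\sum_{k=0}^n \sum_{\ell=0}^k \sum_{i=1}^s \frac{a_{i,k}}{(\ell+\theta)^i}.
\]
The terms with $i$ even and the ones with $i$ odd will be handled the same way since we already control $d_n^{\,s+1-i}a_{i,k}\in\mathbb Z$ for all $i$ with $1\le i\le s$ (the argument of the previous paragraph does not use parity of $i$; for $i=1,2$ one uses $d_n^{s+1-i}a_{i,k}\in\mathbb Z$ directly, with $s+1-i\ge s-1$). The quantity $\zeta(i,\theta)$ is not rational, but $\rho_{0,\theta}$ is: it is the "tail-correction" obtained from the finite sum $\sum_{m=1}^{\infty}R_n(m+\theta) - \sum_{i}\rho_i\zeta(i,\theta)$, and the displayed triple sum exhibits it explicitly as a $\mathbb Q$-linear combination of the $a_{i,k}$ with coefficients $(\ell+\theta)^{-i} = (\ell + a/b)^{-i} = b^i/(b\ell+a)^i$. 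Since $b\le B$ and $B$ is bounded by an absolute constant in terms of $s$ — more precisely $b \le (e^\gamma+o(1))B\log\log B$ by \eqref{b}, which is $o(\sqrt s)$ — the factor $b^i$ is harmless only if we can absorb it, so instead I would argue directly: $\ell+\theta$ ranges over $\{\theta,1+\theta,\dots,n+\theta\}$, and $b(\ell+\theta) = b\ell + a$ ranges over integers, coprime considerations aside. The cleanest route is to note $(\ell+\theta)^{-i}$ appears with $\ell \le k \le n$, so $b\ell+a \le bn + b \le b(n+1)$, and one shows $d_{n+1}^{i}\big/(b\ell+a)^i \cdot b^i$-type expressions are integral after multiplying by suitable powers of $d_{n+1}$ — but $b$ itself need not divide $d_{n+1}$.

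The correct mechanism, which I would adopt following \cite[Lemma 2]{FSZ2019}, is: do \emph{not} break $\theta=a/b$ into its numerator and denominator at this stage. Instead observe that $\rho_{0,\theta}$, via the symmetry relation $(-1)^i a_{i,k} = -a_{i,n-k}$ recorded above, can be rewritten so that the summation over $\ell$ from $0$ to $k$ combines with the one over $k$ to give a combination of $\sum_{\ell=0}^{n}\frac{a_{i,k}}{(\ell+\theta)^i}$-type expressions, but more to the point, $\rho_{0,\theta}$ equals $R_n$ evaluated against a finite partial sum: $-\sum_{k=0}^n\sum_{\ell=0}^{k}\sum_i a_{i,k}(\ell+\theta)^{-i}$ is, grouping by the pole index, $\sum_{k=0}^n\big(\text{partial sum of }R_n(m+\theta)\text{ up to }m=k\big)$ rearranged — and the key is that each individual $\sum_{i=1}^s a_{i,k}(\ell+\theta)^{-i}$ is simply $R_n$'s $k$-th local contribution evaluated at $m=\ell$, i.e. it is a value of the \emph{polynomial} part times $(\ell+\theta)^{-\text{something}}$. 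The honest statement we need is: for each fixed $k$ and $\ell$ with $0\le\ell\le k\le n$, $d_{n+1}^{\,s+1}\sum_{i=1}^s a_{i,k}(\ell+\theta)^{-i}\in\mathbb Z$; since $d_{n+1}^{s+1-i}a_{i,k}\in\mathbb Z$ we are reduced to showing $d_{n+1}^{\,i}(\ell+\theta)^{-i}\in\mathbb Z$, i.e. $d_{n+1}^{\,i}/(\ell+a/b)^i = (b\,d_{n+1})^i/(b\ell+a)^i \in \mathbb Z$. Now $b\ell + a$ is a positive integer with $1\le b\ell+a \le b(n+1)$, and crucially $b\ell+a$ is coprime to $b$ (as $\gcd(a,b)=1$), so its prime factors are the same "type" as those dividing integers up to $n+1$ only if $b\ell+a\le n+1$, which fails in general — so one genuinely needs that the full denominator set is built from $b\in\mathcal\varPsi_B$ and that $P_{B,{\rm den}(r)}$ was defined with the $\text{LCM}$ of $p-1$ over primes $p\mid b$; this LCM, together with $n\in P_{B,{\rm den}(r)}\mathbb N$, is what guarantees (exactly as in \cite[Lemma 2]{FSZ2019}) that the relevant products of the form $b\ell+a$ with the prime powers in $d_{n+1}$ balance out. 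I would therefore write the $i=1,2$ part of the $\rho_i$-argument and this $\rho_{0,\theta}$-argument by directly citing and following \cite[Lemma 2]{FSZ2019} verbatim (as the authors say the rest of the proof is identical), and concentrate the genuinely new work on the $d_n^{s+1-i}a_{i,k}\in\mathbb Z$ step, which is the only place the non-integer $r$ forces a change. In summary: the skeleton is (i) express $a_{i,k}$ as a power-series coefficient via the substitution $t\mapsto t-k$ in \eqref{R_equalsto_FFF}; (ii) apply Proposition \ref{AriF} to the numerator and the classical $d_n$-clearing lemma to the denominator, tracking exponents to land at $d_n^{s+1-i}a_{i,k}\in\mathbb Z$ — the sharp exponent being the crux; (iii) feed this into the explicit formula for $\rho_{0,\theta}$ from Lemma \ref{LemmaLin}, clearing the extra $(\ell+\theta)^{-i}$ denominators using $d_{n+1}$ and the defining property \eqref{P} of $P_{B,{\rm den}(r)}$, exactly as in \cite{FSZ2019}.
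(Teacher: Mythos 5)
Your skeleton matches the paper's: write $a_{i,k}$ as a power-series coefficient of the shift $R_n(t-k)$, feed Proposition \ref{AriF} into the numerator factors $\widetilde F_{b,a}(t-k)$ and the elementary fact $d_n^{\ell_j}(j-k)^{-\ell_j}\in\mathbb{Z}$ into the denominator factors, then defer the rest to \cite[Lemma 2]{FSZ2019}. But your crux step contains an off-by-one error that manufactures a phantom difficulty. The correct extraction is
\[ a_{i,k} \;=\; [t^{s+1-i}]\big(t^{s+1}R_n(t-k)\big), \]
not $[t^{s-i}]$: the part of $t^{s+1}R_n(t-k)$ coming from poles $k'\neq k$ vanishes to order $s+1$ at $t=0$, so for $1\leqslant i\leqslant s$ the coefficient of $t^{s+1-i}$ is exactly $a_{i,k}$. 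With the exponent $s+1-i$ in place, the expansion over tuples $\underline{\ell}$ with ${\rm sum}(\underline{\ell})=s+1-i$, together with Proposition \ref{AriF} and $d_n^{\ell_j}(j-k)^{-\ell_j}\in\mathbb{Z}$, yields $d_n^{s+1-i}a_{i,k}\in\mathbb{Z}$ at once; there is no extra factor of $d_n$ to hunt for, and the step you leave as speculation ("I expect one needs the finer bookkeeping\dots") is in fact immediate. Note also that your logic is reversed: $d_n^{s-i}a_{i,k}\in\mathbb{Z}$ would be a \emph{stronger} statement than $d_n^{s+1-i}a_{i,k}\in\mathbb{Z}$ (multiply by one more $d_n$), so even on your own accounting nothing further would be needed. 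The "modification" the authors flag is not a sharper exponent count; it is that Proposition \ref{AriF} itself must be reproved for general rational $r$, and once it is available the bookkeeping is routine.

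On the $\rho_{0,\theta}$ part you correctly reduce to showing $\sum_{i=1}^{s} d_{n+1}^{s+1}a_{i,k}(\ell+\theta)^{-i}\in\mathbb{Z}$ and correctly observe that the naive bound $b\ell+a\leqslant b(n+1)$ is useless, but you then attribute the rescue to the factor ${\rm LCM}\{p-1\}$ in the definition \eqref{P} of $P_{B,{\rm den}(r)}$. That is a misattribution: that LCM only ensures that $\frac{(2r+1)n}{p-1}$, $rn$, etc.\ are integers, i.e.\ that $A_2(B)^n$ and the prefactors of $F_{b,a}$ make sense as integers. The actual mechanism, as the paper states explicitly, is Sprang's observation that $R_n$ vanishes at $-n+\theta,-n+1+\theta,\dots,\theta$ for $\theta\in\mathcal{F}_B\setminus\{1\}$, which is what controls the sums of residues against $(\ell+\theta)^{-i}$. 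Since you ultimately defer to \cite[Lemma 2]{FSZ2019} verbatim (as the paper does), this does not by itself break the argument, but the reason you give for why it works is wrong.
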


\begin{proof}
For any $k \in \{ 0,1,\cdots,n \}$ and any $i \in \{ 1,2,\cdots,s \}$, by comparing \eqref{R_equalsto_FFF} with the partial fraction expansion \eqref{a_ik} of $R_n(t)$, and by viewing $t^{s+1}R_n(t-k) \in \mathbb{Q}[[t]]$ as a formal series, we have
\begin{align*}
a_{i,k} &= [t^{s+1-i}]\left( t^{s+1}R_{n}(t-k) \right) \\
&= \binom{n}{k}^{s+1} [t^{s+1-i}]\left( \prod_{\frac{a}{b} \in \mathcal{F}_{B}} \widetilde{F}_{b,a}(t-k) \prod_{\substack{0 \leqslant j \leqslant n \\ j \neq k}} \left(1+ \frac{t}{j-k} \right)^{-s-1} \right) \\
&= \binom{n}{k}^{s+1} \sum_{\substack{\underline{\ell} \\ {\rm sum}(\underline{\ell}) = s+1-i}} \prod_{\frac{a}{b} \in \mathcal{F}_{B}} [t^{\ell_{b,a}}]\left( \widetilde{F}_{b,a}(t-k) \right) \prod_{\substack{0 \leqslant j \leqslant n \\ j \neq k}} \frac{(-1)^{\ell_j}\binom{s+\ell_j}{\ell_j}}{(j-k)^{\ell_j}}
\end{align*}
where the sum is taken for all tuples $\underline{\ell}$ consisting of nonnegative integers $\ell_{b,a}$ and $\ell_j$ such that
\[ {\rm sum}(\underline{\ell}) = \sum_{\frac{a}{b} \in \mathcal{F}_{B}} \ell_{b,a} + \sum_{\substack{0 \leqslant j \leqslant n \\ j \neq k}} \ell_j = s+1-i . \]
By Proposition \ref{AriF} and the fact $d_n^{\ell_j} \frac{1}{(j-k)^{\ell_j}} \in \mathbb{Z}$, we derive that
\[ d_{n}^{s+1-i} a_{i,k} \in \mathbb{Z}. \]

Once $d_{n}^{s+1-i} a_{i,k} \in \mathbb{Z}$ is established, the rest of the proof is the same as \cite[Lemma 2]{FSZ2019}. We only mention that the most remarkable part is $d_{n+1}^{s+1} \rho_{0,\theta} \in \mathbb{Z}$, which is proved by showing
\[ \sum_{i=1}^{s} \frac{d_{n+1}^{s+1} a_{i,k}}{(\ell + \theta)^{i}} \]
is an integer for any $0 \leqslant \ell \leqslant k \leqslant n$ and $\theta \in \mathcal{F}_B$. It uses the fact that $R_n(t)$ has zeros $-n+\theta, -n+1+\theta, -n+2 +\theta ,\cdots, \theta$ for $\theta \in \mathcal{F}_B \setminus \{1\}$, this observation origins from Sprang \cite[Lemma 1.4]{Sp18}.

\end{proof}
\bigskip

\section{Analysis lemmas}
\label{Analysis_lemmas}

Under our assumptions $s \geqslant  10(2r+1)B^2$ and $B \gg 1$, we have the following:

\smallskip
\begin{lemma}[analysis lemma]\label{LemmaAna}
We have
\[\lim_{n \rightarrow +\infty} \left(r_{n,1}\right)^{\frac{1}{n}} = g(x_0) ,\]
where
\[g(X) = A_1(B)A_2(B) {\rm den}(r)^{(2r+1)|\mathcal{F}_B|} (X+2r+1)^{(2r+1)|\mathcal{F}_B|} \left( \frac{(X+r)^r}{(X+r+1)^{r+1}} \right)^{s+1},\]
and $x_0$ is the unique positive real solution of the equation
\[f(X) = \left( \frac{X+2r+1}{X} \right)^{|\mathcal{F}_B|} \left( \frac{X+r}{X+r+1} \right)^{s+1} = 1.\]
Moreover, for any $\theta \in \mathcal{F}_B$, we have
\[\lim_{n \rightarrow +\infty} \frac{r_{n,1}}{r_{n,\theta}} = 1.\]
\end{lemma}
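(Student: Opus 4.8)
The plan is to follow the classical Ball–Rivoal / Rivoal asymptotic analysis for series of rational functions, adapted to the present auxiliary function $R_n(t)$. First I would observe that $r_{n,1} = \sum_{m=1}^\infty R_n(m+1)$ and, by Definition \ref{auxiliary_function}, $R_n(m+1)$ is, up to the constant factors, a ratio of products of shifted factorials in the variable $m$. The standard approach is to write $R_n(m+1)$ in terms of Gamma functions and apply Stirling's formula: setting $m = Xn$ and letting $n \to \infty$, each factor $\Gamma$-type contribution of the form $\prod_j (t + \text{linear in } j)$ over a range of length proportional to $n$ contributes $e^{n \varphi(\cdot)(X)}$ for an explicit $\varphi$. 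Concretely, $n!^{s+1}$ contributes, $\big(\frac{n}{\mathrm{den}(r)}\big)!^{\mathrm{den}(r)(2r+1)|\mathcal F_B|}$ contributes, the product $\prod_{\theta \in \mathcal F_B}\prod_{j=0}^{(2r+1)n-1}(t - rn + j + \theta)$ with $t = (X+1)n$ contributes (the $\theta$-shifts are negligible in the exponential scale since $|\mathcal F_B|$ is fixed as $n\to\infty$ and each $\theta \in (0,1]$ is bounded), and the denominator $\prod_{j=0}^n (t+j)^{s+1}$ contributes. Collecting these, $R_n((X+1)n)^{1/n} \to G(X)$ for an explicit function $G$; I would then identify that the pointwise limit, after also accounting for the combinatorial sum over $m$ being dominated by its largest term (a Laplace-type / $\sup$ argument over $X > 0$), gives $\limsup_n r_{n,1}^{1/n} = \sup_{X>0} G(X)$, and that this supremum equals $g(x_0)$ with $x_0$ the critical point, i.e. the solution of $G'(X)=0$, which after simplification is exactly the stated equation $f(X)=1$.

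The second part, $\lim_{n\to\infty} r_{n,1}/r_{n,\theta} = 1$, I would handle by the same dominated-term analysis applied to $r_{n,\theta} = \sum_{m=1}^\infty R_n(m+\theta)$: replacing the integer shift by $\theta \in (0,1]$ changes neither the location $x_0 n$ of the dominant term nor the exponential rate, so both $r_{n,1}$ and $r_{n,\theta}$ have the same exponential growth $g(x_0)^n$. To upgrade from "same exponential rate" to "ratio tends to $1$" one needs the sharper statement that the sum is asymptotically a single Gaussian-type peak: near $m = x_0 n$ the summand behaves like $g(x_0)^n \cdot c \cdot n^{-\kappa} e^{-\alpha (m - x_0 n)^2/n}$ with the \emph{same} constants $c, \kappa, \alpha$ for every $\theta$ (the $\theta$-dependence sits in lower-order multiplicative factors that converge to $1$), so the ratio of the two sums converges to $1$. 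This is the usual phenomenon that these linear forms are, up to $1+o(1)$, independent of the Hurwitz shift — precisely the mechanism exploited in \cite{BR01} and \cite{FSZ2019}.

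I expect the main obstacle to be the bookkeeping in the saddle-point computation: correctly assembling the four exponential contributions with the right signs and the right ranges of summation/multiplication (in particular that $t-rn$ runs over negative values for small $m$, so one must be careful with $|\Gamma|$ versus $\Gamma$ and with the fact that $R_n$ may change sign, which is why the statement is about $r_{n,1}$ and the precise normalization by $A_1(B)A_2(B)$ matters), and then verifying that the critical-point equation for $G$ simplifies to the clean form $\big(\frac{X+2r+1}{X}\big)^{|\mathcal F_B|}\big(\frac{X+r}{X+r+1}\big)^{s+1} = 1$ and that $G(x_0) = g(x_0)$ as written. A secondary technical point is justifying the interchange of limit and infinite sum: one needs a uniform tail bound coming from $\deg R_n \leqslant -2$ (so $R_n(m+\theta) = O(m^{-2})$ for $m$ large, uniformly enough) to control the tail $m \gg n$, together with the elementary observation that for $m$ in a bounded-ratio range away from $x_0 n$ the summand is exponentially smaller than the peak. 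Once these are in place the convergence of $r_{n,1}/r_{n,\theta}$ to $1$ follows from comparing the dominant Gaussian peaks, as sketched above.
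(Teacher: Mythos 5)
Your treatment of the first claim is essentially the paper's: write the summand via Gamma functions, apply Stirling along $k=\kappa n$, and identify the Laplace-dominant term. The paper makes this clean by first shifting to $\widehat{R}_n(t)=R_n(t+rn)$, which kills the first $rn-1$ (vanishing) terms and makes every remaining summand positive — this disposes of the sign issue you flag as an obstacle — and then splits the sum into $k<c_1n$, $c_1n\leqslant k\leqslant n^{10}$, $k>n^{10}$, obtaining $\widehat{R}_n(k+\theta)=n^{O(1)}h(\kappa)^n$ with $h(x)=f(x)^xg(x)$ and $h'/h=\log f$, so the maximum is at $f(x_0)=1$ with value $h(x_0)=g(x_0)$. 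Your outline matches this and I see no gap in that part.

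For the second claim your route has a genuine gap. You assert that near $m=x_0n$ the summand is a Gaussian peak $g(x_0)^n\,c\,n^{-\kappa}e^{-\alpha(m-x_0n)^2/n}$ with the \emph{same} constants for every $\theta$, the $\theta$-dependence being ``lower-order multiplicative factors that converge to $1$.'' That last clause is precisely the point at issue, and it is not a generic lower-order effect: replacing the shift $1$ by $\theta$ changes the argument of $\widehat{R}_n$ by the bounded amount $1-\theta$, and by $\Gamma(x+\tau)/\Gamma(x)=(1+o(1))x^{\tau}$ this multiplies the summand by $(1+o(1))f(\kappa)^{1-\theta}$ — a quantity that is \emph{not} close to $1$ for general $\kappa$. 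It equals $1+o(1)$ only because the sum localizes at the saddle $\kappa=x_0$, where $f(x_0)=1$ by the very definition of $x_0$. Without invoking this identity your claim that the peak constants are $\theta$-independent is unjustified, and the argument does not close. Moreover, the full Gaussian local shape (same $c$, $\alpha$ after a second-order Stirling expansion, uniformly in $k$) is more than you need and would cost extra work to make rigorous. The paper avoids all of this: it localizes both sums to $k\in[(x_0-\varepsilon_0)n,(x_0+\varepsilon_0)n]$, compares them \emph{termwise} via $\widehat{R}_n(k+1)/\widehat{R}_n(k+\theta)=(1+o(1))f(\kappa)^{1-\theta}$, sandwiches the ratio between $(1+\varepsilon_0)^{\mp1}f(x_0\pm\varepsilon_0)^{1-\theta}$, and lets $\varepsilon_0\to0$. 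You should replace the Gaussian-peak step by this termwise comparison, or at minimum make explicit that the $\theta$-independence of the leading constant is a consequence of the saddle equation $f(x_0)=1$.
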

\smallskip

Before proving the analysis lemma, we first collect some properties of the functions $f$ and $g$. Note that these two functions depend only on $r,s$ and $B$.

\smallskip
\begin{proposition}\label{g_x_0}
Let $f(x)$ and $g(x)$ be the functions in Lemma \ref{LemmaAna} (defined on $x \in (0,+\infty)$). Then
\begin{enumerate}
\item[(1)] There exists a unique $x_0 \in (0,+\infty)$ such that $f(x_0) = 1$, $f(x) > 1$ on $(0,x_0)$ and $f(x) < 1$ on $x \in (x_0,+\infty)$. Moreover,
    \[ x_0 < \frac{r(r+1)|\mathcal{F}_B|}{s+1 - (2r+1)|\mathcal{F}_B|} .\]
\item[(2)] If we fix $r \in \mathbb{Q}_{+}$ and assume in addition that $B = cs^{1/2}/\log^{1/2} s$ for some positive constant $c$, when $s \rightarrow +\infty$, we have
    \[ \lim_{\substack{s \rightarrow +\infty \\ B = cs^{1/2}/\log^{1/2} s}} g(x_0)^{\frac{1}{s+1}} = \exp\left( \frac{\zeta(2)\zeta(3)}{4\zeta(6)}(2r+1)c^2 \right) \frac{r^r}{(r+1)^{r+1}} ,\]
\end{enumerate}
\end{proposition}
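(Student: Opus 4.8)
The plan is to study $h(X) := \log f(X) = N\log\frac{X+2r+1}{X} - S\log\frac{X+r+1}{X+r}$ on $(0,+\infty)$, where I abbreviate $N := |\mathcal{F}_B|$ and $S := s+1$, and then to deduce part (2) by expanding $g(x_0)^{1/(s+1)}$ into a product of four factors, each of which I can control via Lemma \ref{A1A2} and the bound from part (1).

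For part (1), I first differentiate. Setting $u = u(X) := X(X+2r+1)$, a continuous strictly increasing bijection of $(0,+\infty)$ onto itself, one finds
\[ h'(X) = \frac{S}{(X+r)(X+r+1)} - \frac{(2r+1)N}{X(X+2r+1)} = \frac{S}{u + r(r+1)} - \frac{(2r+1)N}{u}. \]
By Proposition~\ref{SetProposition}(3) and the standing hypothesis $s \geqslant 10(2r+1)B^2$ we have $N \leqslant B^2$, hence $S-(2r+1)N>0$, and an elementary rearrangement shows that $h'(X) < 0$ if and only if $u < u^{*} := \frac{(2r+1)r(r+1)N}{S-(2r+1)N}$. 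Thus $h$ is strictly decreasing on $(0,X_{\min}]$ and strictly increasing on $[X_{\min},+\infty)$, where $X_{\min}$ is the unique point with $u(X_{\min}) = u^{*}$. Next I record the boundary behaviour: $h(X)\to+\infty$ as $X\to 0^{+}$ (the first logarithm diverges while the second stays bounded), whereas the first-order expansion $h(X) = \frac{(2r+1)N-S}{X} + O(X^{-2})$ shows $h(X)\to 0^{-}$ as $X\to+\infty$, the sign again coming from $(2r+1)N < S$. Combining the monotonicity with these two facts, $h$ has a negative minimum at $X_{\min}$, is positive on an interval $(0,x_0)$, vanishes exactly at $x_0\in(0,X_{\min})$, and is negative on $(x_0,+\infty)$; exponentiating back gives the stated trichotomy for $f$. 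Finally, $X_{\min}(X_{\min}+2r+1)=u^{*}$ together with $X_{\min}>0$ forces $(2r+1)X_{\min}<u^{*}$, whence
\[ x_0 < X_{\min} < \frac{u^{*}}{2r+1} = \frac{r(r+1)|\mathcal{F}_B|}{s+1-(2r+1)|\mathcal{F}_B|}. \]

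For part (2), I write
\[ g(x_0)^{\frac{1}{s+1}} = A_1(B)^{\frac{1}{s+1}}\, A_2(B)^{\frac{1}{s+1}}\, \big( {\rm den}(r)\,(x_0+2r+1) \big)^{\frac{(2r+1)|\mathcal{F}_B|}{s+1}}\, \frac{(x_0+r)^{r}}{(x_0+r+1)^{r+1}} \]
and examine the four factors under $B = cs^{1/2}/\log^{1/2}s$. By part (1) and $|\mathcal{F}_B|\leqslant B^2$ one has $0<x_0<\frac{r(r+1)B^2}{s+1-(2r+1)B^2} = O(1/\log s)\to 0$, so the last factor tends to $\frac{r^r}{(r+1)^{r+1}}$; moreover $\frac{(2r+1)|\mathcal{F}_B|}{s+1} = O(1/\log s)\to 0$ while ${\rm den}(r)\,(x_0+2r+1)$ stays in a fixed compact subinterval of $(0,\infty)$, so the third factor tends to $1$. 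For $A_2$, Lemma~\ref{A1A2} gives $1\leqslant A_2(B)^{1/(s+1)} \leqslant \exp\!\big( \tfrac{10(2r+1)B^2(\log\log B)^2}{s+1} \big) = \exp\!\big(O((\log\log s)^2/\log s)\big)\to 1$. For $A_1$, Lemma~\ref{A1A2} gives $\log A_1(B) = \big( \tfrac12\tfrac{\zeta(2)\zeta(3)}{\zeta(6)} + o(1)\big)(2r+1)B^2\log B$; substituting $B^2 = c^2 s/\log s$ and $\log B = \tfrac12\log s\,(1+o(1))$ yields $B^2\log B = \tfrac12 c^2 s\,(1+o(1))$, so $\frac{\log A_1(B)}{s+1}\to \frac{\zeta(2)\zeta(3)}{4\zeta(6)}(2r+1)c^2$ and $A_1(B)^{1/(s+1)}\to\exp\!\big(\frac{\zeta(2)\zeta(3)}{4\zeta(6)}(2r+1)c^2\big)$. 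Multiplying the four limits gives the asserted value.

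I expect the main difficulty to be part (1): $\log f$ is not monotone, and the argument hinges on noticing that after the substitution $u = X(X+2r+1)$ it becomes unimodal (first decreasing, then increasing), so that uniqueness of $x_0$ and the explicit upper bound both fall out of combining the sign of $h'$ with the behaviour of $h$ at $0$ and at $+\infty$. Part (2) is then a bookkeeping exercise in asymptotics, relying on Lemma~\ref{A1A2} and on $x_0\to 0$.
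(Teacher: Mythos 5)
Your proof is correct and follows essentially the same route as the paper: the paper likewise computes $f'/f$, identifies the unique critical point (your $X_{\min}$ is the paper's $x_1$, defined by the same quadratic $x_1^2+(2r+1)x_1=u^*$), uses the boundary behaviour $f(0^+)=+\infty$, $f(+\infty)=1^-$ to get existence, uniqueness and the bound $x_0<x_1$, and then deduces part (2) from the estimates for $A_1$, $A_2$, $|\mathcal{F}_B|$ and $x_0\to 0$. Your write-up just makes the substitution $u=X(X+2r+1)$ and the final asymptotic bookkeeping explicit.
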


\begin{proof}
For the first proposition, by calculating $\frac{f'(x)}{f(x)}$, we find that $f'(x) = 0$ has a unique positive solution $x_1$ which satisfies
\begin{equation}\label{x_1}
(s+1-(2r+1)|\mathcal{F}_B|)x_1^2 + (2r+1)(s+1-(2r+1)|\mathcal{F}_B|)x_1 - r(r+1)(2r+1)|\mathcal{F}_B| = 0,
\end{equation}
and $f$ is decreasing on $(0,x_1)$, increasing on $(x_1,+\infty)$. Since $f(0^{+}) = +\infty$ and $f(+\infty) = 1$, there exists a unique $x_0$ satisfying all the requirements. The last (very weak) bound for $x_0$ comes from $x_0 < x_1$ and \eqref{x_1}.

The second proposition follows from the estimates for $A_1(B)$, $A_2(B)$, $|\mathcal{F}_B|$ (see Proposition \ref{SetProposition} and Lemma \ref{A1A2}), and $x_0 \rightarrow 0$.
\end{proof}
\smallskip

Now we prove Lemma \ref{LemmaAna}. We claim that it can be proved by the same strategy in \cite[Lemma 3]{FSZ2019}, but we give a slightly modified proof.

\smallskip
\begin{proof}[proof of Lemma \ref{LemmaAna}]
For any $\theta \in \mathcal{F}_B$, since $R_n(m+\theta) = 0 $ for $m=1,2,\cdots,rn-1$, we define the shift version of the auxiliary rational functions:
\[ \widehat{R}_{n}(t) = R_{n}(t+rn) ,\]
then by \eqref{r_n_theta} we have
\begin{equation}\label{r_ntheta}
r_{n,\theta}  = \sum_{k=0}^{\infty} \widehat{R}_{n}(k+\theta).
\end{equation}

We have the following two expressions for $\widehat{R}_{n}(t)$:
\begin{align}
\widehat{R}_{n}(t) &= A_1(B)^n A_2(B)^n \frac{n!^{s+1}}{\left( \frac{n}{{\rm den}(r)} \right)!~^{{\rm den}(r)(2r+1)|\mathcal{F}_B|}} \cdot \frac{ t ~ \prod_{\theta' \in \mathcal{F}_B} \prod_{j=0}^{(2r+1)n-1} \left(t+j+\theta' \right)}{ \prod_{j=0}^{n} (t+rn+j)^{s+1}} \label{Shift_R_Expression1} \\
&= A_1(B)^n A_2(B)^n \frac{n!^{s+1}}{\left( \frac{n}{{\rm den}(r)} \right)!~^{{\rm den}(r)(2r+1)|\mathcal{F}_B|}}  \nonumber\\
&\qquad \times t \cdot \left( \prod_{\theta' \in \mathcal{F}_B } \frac{\Gamma(t+(2r+1)n+\theta')}{\Gamma(t+\theta')} \right) \cdot  \left( \frac{\Gamma(t+rn)}{\Gamma(t+(r+1)n+1)} \right)^{s+1} \label{Shift_R_Expression2}.
\end{align}

We define $c_1 = \min(e^{-10s/r},\frac{x_0}{10})$, which is independent of $n$. To estimate the series \eqref{r_ntheta} for $r_{n,\theta}$, we divide it into three parts:
\[r_{n,\theta} = \left( \sum_{0 \leqslant k < c_1n} + \sum_{c_1 n \leqslant k \leqslant n^{10}} + \sum_{k>n^{10}} \right) \left( \widehat{R}_{n}(k+\theta) \right).\]

For the first part, by \eqref{Shift_R_Expression1}, a direct computation and trivial estimates give that $ \frac{\widehat{R}_{n}^{\prime}(t)}{\widehat{R}_{n}(t)} > 0$ for all $t \in(0,2c_1 n]$. So $\widehat{R}_{n}(t)$ is increasing on $t \in (0,2c_1 n]$, we have
\begin{equation}\label{firstpart}
\sum_{0 \leqslant k < c_1n} \widehat{R}_{n}(k+\theta) < \left( c_{1}n+1 \right)\widehat{R}_{n}([c_{1}n]+\theta).
\end{equation}

To deal with the middle part, for all $c_1 n \leqslant k \leqslant n^{10}$, we denote by $\kappa = \kappa(k,n) = \dfrac{k}{n} \in [c_1, +\infty)$. By applying Stirling's formula in the weak form
\[ \Gamma(x) = x^{O_{x \rightarrow +\infty}(1)}\left( \frac{x}{e} \right)^x \]
for the equation \eqref{Shift_R_Expression2}, a calculation shows that as $n \rightarrow +\infty$:
\begin{align}
\widehat{R}_{n}(k+\theta) &= n^{O(1)} \cdot A_1(B)^n A_2(B)^n {\rm den}(r)^{(2r+1)|\mathcal{F}_B|n}  \nonumber \\
&\quad \times \left( \frac{(\kappa + 2r + 1)^{\kappa + 2r + 1}}{\kappa^\kappa} \right)^{|\mathcal{F}_B|n} \left( \frac{(\kappa + r)^{\kappa + r}}{(\kappa + r + 1)^{\kappa + r + 1}} \right)^{(s+1)n}  \nonumber \\
&=   n^{O(1)} \cdot \left(  f(\kappa)^{\kappa} g(\kappa)  \right)^n \nonumber \\
&= n^{O(1)} \cdot h(\kappa)^n  \label{secondpartoriginal}
\end{align}
uniformly for any $k \in[c_1n,n^{10}]$ and any $\theta \in \mathcal{F}_B$ (the absolute bound for $O(1)$ depends only on $s,B,r$ and ${\rm den}(r)$). Where the function $h(x)$ is defined for $x>0$ as $h(x)= f(x)^x g(x)$, a direct computation shows that $\frac{h'(x)}{h(x)} = \log f(x)$. Hence, $h(x)$ achieves its maximum only at $x = x_0$ with maximal value $h(x_0) = g(x_0)$.

In particular, we have the following bound for each $k \in  [c_1 n , n^{10}]$:
\begin{equation}\label{secondpart}
\widehat{R}_{n}(k+\theta) \leqslant  n^{O(1)} \cdot g(x_0)^n.
\end{equation}

Finally, for the tail part, for any $k > n^{10}$, when $n \geq \max(10(2r+1), \frac{10A_1(B)A_2(B)}{g(x_0)})$, by \eqref{Shift_R_Expression1} with some trivial estimates and our assumption $s \geq 10(2r+1)B^2$, we have
\begin{align*}
\widehat{R}_{n}(k+\theta) &<   \frac{(2A_1(B)A_2(B)n)^{(s+1)n}}{k^{\frac{9}{10}(s+1)n + 2}} \\
&< \left( \frac{g(x_0)}{2} \right)^n \frac{1}{k^2}.
\end{align*}
As a conclusion, we obtain the following bound for the tail part for all sufficiently large $n$:
\begin{equation}\label{thirdpart}
\sum_{k> n^{10}}  \widehat{R}_{n}(k+\theta) \leqslant \left( \frac{g(x_0)}{2} \right)^n.
\end{equation}

\smallskip

Now, in view of  the estimates \eqref{firstpart}, \eqref{secondpart} and \eqref{thirdpart}, we have
$r_{n,1} \leqslant n^{O(1)} g(x_0)^n$. On the other hand \eqref{secondpartoriginal} implies that $r_{n,1} \geqslant \widehat{R}_{n}(\lfloor x_0n \rfloor) = n^{O(1)}h\left(x_0 + o(1)\right)^n$. Therefore,
\[\lim_{n \rightarrow +\infty} \left(r_{n,1}\right)^{\frac{1}{n}} = g(x_0).\]

To prove the last statement in the lemma, we first fix an arbitrary (sufficiently) small $\varepsilon_0 > 0$. For all $\theta \in \mathcal{F}_B$, we have
\begin{equation}\label{r_n_geq}
r_{n,\theta} \geqslant \sum_{(x_0 - \varepsilon_0)n \leqslant k \leqslant (x_0 + \varepsilon_0)n} \widehat{R}_{n}(k+\theta).
\end{equation}
In view of the estimates \eqref{firstpart}, \eqref{secondpartoriginal} and \eqref{thirdpart}, we also have
\begin{align}
r_{n,\theta} &\leqslant n^{O(1)}\max\big( h(x_0 - \varepsilon_0), h(x_0 + \varepsilon_0) \big)^n \quad + \quad  \sum_{(x_0 - \varepsilon_0)n \leqslant k \leqslant (x_0 + \varepsilon_0)n} \widehat{R}_{n}(k+\theta) \nonumber\\
&< (1+\varepsilon_0) \sum_{(x_0 - \varepsilon_0)n \leqslant k \leqslant (x_0 + \varepsilon_0)n} \widehat{R}_{n}(k+\theta), \label{r_n_leq}
\end{align}
provided $n$ is sufficiently large with respect to $\varepsilon_0$, i.e., $n \geqslant n_0(\varepsilon_0)$. For all $k$ with $(x_0 - \varepsilon_0)n \leqslant k \leqslant (x_0 + \varepsilon_0)n$, let $\kappa = \kappa(n,k) = \dfrac{k}{n}$ as before. We now use the fact that, for any fixed real number $\tau$,
\begin{equation}\label{Gamma_shift_ratio}
\dfrac{\Gamma(x+\tau)}{\Gamma(x)} = \left( 1+o_{x \rightarrow +\infty}(1) \right) x^{\tau} .
\end{equation}
Applying \eqref{Gamma_shift_ratio} to \eqref{Shift_R_Expression2}, we derive that
\begin{align}
\frac{\widehat{R}_{n}(k+1)}{\widehat{R}_{n}(k+\theta)} &= \left( 1+o(1) \right) \cdot \left( \frac{\kappa + 2r + 1}{\kappa} \right)^{|\mathcal{F}_B| (1-\theta)}\left( \frac{\kappa + r}{\kappa + r+1} \right)^{(s+1)(1-\theta)} \nonumber \\
&= \left( 1+o(1) \right) \cdot f(\kappa)^{1 - \theta} \label{R_x0_pmvarepsilon_n}
\end{align}
uniformly for $k \in [(x_0 - \varepsilon_0)n , (x_0 + \varepsilon_0)n]$ as $n \rightarrow +\infty$. By \eqref{r_n_geq}, \eqref{r_n_leq} and \eqref{R_x0_pmvarepsilon_n} we find that
\[\left( 1+o(1) \right)\frac{1}{1+\varepsilon_0} f(x_0 + \varepsilon_0)^{1 - \theta} \leqslant \frac{r_{n,1}}{r_{n,\theta}} \leqslant \left( 1+o(1) \right)(1+\varepsilon_0)f(x_0 - \varepsilon_0)^{1-\theta},\]
thus
\[\frac{1}{1+\varepsilon_0} f(x_0 + \varepsilon_0)^{1 - \theta} \leqslant \liminf_{n \rightarrow +\infty} \frac{r_{n,1}}{r_{n,\theta}} \leqslant \limsup_{n \rightarrow +\infty} \frac{r_{n,1}}{r_{n,\theta}} \leqslant  (1+\varepsilon_0) f(x_0 - \varepsilon_0)^{1-\theta}.\]
It is true for all sufficiently small $\varepsilon_0>0$. Letting $\varepsilon_0 \rightarrow 0^+$, we deduce that
\[\lim_{n \rightarrow +\infty} \frac{r_{n,1}}{r_{n,\theta}} = 1.\]
This completes the proof of Lemma \ref{LemmaAna}.
\end{proof}
\bigskip

\section{Elimination procedure and proof of the theorem}
\label{Elimination_procedure}

We prove Theorem \ref{MainThm} in this section. We will use the same strategy as \cite[\S 5]{FSZ2019}, namely, an elimination procedure. So we only give an outline of this elimination procedure.

We denote by $I_s = \{ 3,5,7,\cdots,s \}$. For any subset $J \subset I_s$ with $|J| = |\mathcal{\varPsi}_B| -1$, since the following general Vandermonde matrix (see, for instance, \cite[pp. 76-77]{GK02})
\[\big[b^{j}\big]_{b \in \mathcal{\varPsi}_B, \  j \in \{1\} \cup J }\]
is invertible, there exist integers $w_b \in \mathbb{Z}$ for all $b \in \mathcal{\varPsi}_B$ such that $\sum_{b \in \mathcal{\varPsi}_B} w_{b} b^{j} = 0$ for any $j \in J$ and $\sum_{b \in \mathcal{\varPsi}_B} w_{b} b \neq 0$. (Note that these $w_b$ depend only on $J$ and $\mathcal{\varPsi}_B$.) Since
\begin{equation}\label{Hurwitz_zeta_to_zeta}
\sum_{k=1}^{b} \zeta\left(i,\frac{k}{b}\right) = \sum_{k=1}^{b} \sum_{m=0}^{\infty} \frac{b^i}{(mb+k)^i} =  b^i \zeta(i),
\end{equation}
we derive that (recall Proposition \ref{SetProposition} (2), $\frac{k}{b} \in \mathcal{F}_B$)
\[\widehat{r}_{n,b} \coloneqq \sum_{k=1}^{b} r_{n,\frac{k}{b}} = \sum_{k=1}^{b} \rho_{0,\frac{k}{b}} + \sum_{i \in I_s} \rho_{i} b^i \zeta(i)\]
is a linear combination of odd zeta values. By Lemma \ref{LemmaAna}, we have $\widehat{r}_{n,b} = (b+o(1))r_{n,1}$  as  $n \rightarrow +\infty$. Let
\[\widetilde{r}_{n} \coloneqq \sum_{b \in \mathcal{\varPsi}_B} w_b \widehat{r}_{n,b},\]
then
\begin{equation}\label{widetilde_r_n_arithmetic}
\widetilde{r}_{n} = \sum_{b \in \mathcal{\varPsi}_B} w_b \sum_{k=1}^{b} \rho_{0,\frac{k}{b}} + \sum_{i \in I_s \setminus J} \left( \sum_{b \in \mathcal{\varPsi}_B} w_b b^i \right) \rho_{i} \zeta(i),
\end{equation}
and as $n \rightarrow +\infty$,
\begin{equation}\label{widetilde_r_n_analysis}
\widetilde{r}_{n} = \left( \sum_{b \in \mathcal{\varPsi}_B} w_{b} b + o(1)\right) r_{n,1} ~\text{with}~ \sum_{b \in \mathcal{\varPsi}_B} w_{b} b \neq 0
\end{equation}
Equation \eqref{widetilde_r_n_arithmetic} shows that we can eliminate any $|\mathcal{\varPsi}_B|-1$ odd zeta values.

\smallskip
\begin{proposition}\label{FinalProp}
If $g(x_0) < e^{-(s+1)}$, then the number of irrationals in the odd zeta values set $\{ \zeta(i) \}_{i \in I_s}$ is at least $|\mathcal{\varPsi}_B|$.
\end{proposition}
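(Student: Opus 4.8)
The plan is to deduce the proposition from the classical ``nonzero integer of absolute value less than $1$'' contradiction, applied to the eliminated linear forms $\widetilde{r}_n$ built in \eqref{widetilde_r_n_arithmetic}--\eqref{widetilde_r_n_analysis}. Assume, for contradiction, that at most $|\mathcal{\varPsi}_B|-1$ of the numbers $\zeta(3),\zeta(5),\dots,\zeta(s)$ are irrational. Since $|I_s|=\frac{s-1}{2}$ is far larger than $|\mathcal{\varPsi}_B|$ for $s$ large (by Proposition~\ref{SetProposition} and the standing assumption $s\geqslant 10(2r+1)B^2$, which forces $B=O(s^{1/2})$, hence $|\mathcal{\varPsi}_B|=O(s^{1/2})$), I can choose a subset $J\subset I_s$ with $|J|=|\mathcal{\varPsi}_B|-1$ that contains every index $i$ with $\zeta(i)\notin\mathbb{Q}$ (padding with rational indices if necessary). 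Running the elimination procedure with this $J$, equation \eqref{widetilde_r_n_arithmetic} exhibits $\widetilde{r}_n$ as a $\mathbb{Q}$-linear combination of $1$ and the $\zeta(i)$ with $i\in I_s\setminus J$, all of which are now rational; hence $\widetilde{r}_n\in\mathbb{Q}$ for every admissible $n\in P_{B,{\rm den}(r)}\mathbb{N}$.

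Next I would clear denominators in a controlled way. Let $D$ be a common denominator of the finitely many rational numbers $\zeta(i)$, $i\in I_s\setminus J$. On the arithmetic side, Lemma~\ref{LemmaAri} gives $d_{n+1}^{s+1}\rho_{0,\theta}\in\mathbb{Z}$ for all $\theta\in\mathcal{F}_B$, and $d_{n+1}^{s+1}\rho_i\in\mathbb{Z}$ for all odd $3\leqslant i\leqslant s$ (since $d_n\mid d_{n+1}$, the statement $d_n^{s+1-i}\rho_i\in\mathbb{Z}$ upgrades to $d_{n+1}^{s+1}\rho_i\in\mathbb{Z}$); as $w_b\in\mathbb{Z}$ and $b^i\in\mathbb{Z}$ as well, \eqref{widetilde_r_n_arithmetic} yields $D\,d_{n+1}^{s+1}\,\widetilde{r}_n\in\mathbb{Z}$. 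On the analytic side, \eqref{widetilde_r_n_analysis} combined with Lemma~\ref{LemmaAna} (which gives $\lim_{n}r_{n,1}^{1/n}=g(x_0)>0$, so in particular $r_{n,1}\neq 0$ for $n$ large) and with $\sum_{b\in\mathcal{\varPsi}_B}w_b b\neq 0$ shows $\widetilde{r}_n\neq 0$ for all large admissible $n$. Thus $D\,d_{n+1}^{s+1}\,\widetilde{r}_n$ is a \emph{nonzero} integer.

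To close the argument I would estimate its size. Since $D^{1/n}\to 1$, $d_{n+1}^{1/n}\to e$ (prime number theorem), and $|\widetilde{r}_n|^{1/n}\to g(x_0)$ by \eqref{widetilde_r_n_analysis} together with Lemma~\ref{LemmaAna}, we obtain
\[\lim_{n\to\infty}\bigl| D\,d_{n+1}^{s+1}\,\widetilde{r}_n\bigr|^{1/n} \;=\; e^{s+1}\,g(x_0)\;<\;e^{s+1}e^{-(s+1)}\;=\;1,\]
using the hypothesis $g(x_0)<e^{-(s+1)}$. Hence $0<\bigl| D\,d_{n+1}^{s+1}\,\widetilde{r}_n\bigr|<1$ for all sufficiently large $n\in P_{B,{\rm den}(r)}\mathbb{N}$, which is impossible for a nonzero integer. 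This contradiction shows that at least $|\mathcal{\varPsi}_B|$ of $\zeta(3),\zeta(5),\dots,\zeta(s)$ are irrational.

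Since all the substantive work — the integrality of Lemma~\ref{LemmaAri} and the sharp growth rate of Lemma~\ref{LemmaAna} — is already in hand, I do not expect a serious obstacle in this final step; it is the routine Nesterenko-type conclusion. The only points needing a little care are: keeping $n$ restricted to multiples of $P_{B,{\rm den}(r)}$ throughout (harmless, since there are infinitely many such $n$); checking that the fixed quantities $D$ and $\sum_b w_b b$ do not affect the exponential rate; and verifying that the index set $J$ to be eliminated can indeed be chosen to contain all irrational indices, which is precisely why the inequality $|I_s|\geqslant|\mathcal{\varPsi}_B|-1$ (valid for $s$ large) is needed.
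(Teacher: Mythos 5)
Your proof is correct and follows essentially the same route as the paper: choose $J$ with $|J|=|\mathcal{\varPsi}_B|-1$ covering all irrational indices, clear denominators with a common denominator of the rational zeta values (the paper calls it $A$, you call it $D$) together with $d_{n+1}^{s+1}$, and derive the contradiction $0<\lim_n\bigl|A\,d_{n+1}^{s+1}\widetilde{r}_n\bigr|^{1/n}=e^{s+1}g(x_0)<1$. The extra points you flag (upgrading $d_n^{s+1-i}\rho_i\in\mathbb{Z}$ to $d_{n+1}^{s+1}\rho_i\in\mathbb{Z}$, nonvanishing of $\widetilde{r}_n$ via \eqref{widetilde_r_n_analysis}, and the cardinality check for choosing $J$) are all correct and implicit in the paper's argument.
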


\begin{proof}
We argue by contradiction. Suppose the number of irrationals in $\{ \zeta(i) \}_{i \in I_s}$ is less than $|\mathcal{\varPsi}_B|$, then we can take a subset $J \subset I_s$ with $|J| = |\mathcal{\varPsi}_B| -1$ such that $\zeta(i) \in \mathbb{Q}$ for all $I_s \setminus J$, let $A$ be the common denominator of these rational zeta values. Define $\widetilde{r}_{n}$ as above for this $J$, then by \eqref{widetilde_r_n_arithmetic} and Lemma \ref{LemmaAri}, for all $n \in P_{B,{\rm den}(r)}\mathbb{N}$, we derive that
\[Ad_{n+1}^{s+1} \widetilde{r}_{n} \in \mathbb{Z}.\]
But by \eqref{widetilde_r_n_analysis}, Lemma \ref{LemmaAna} and the hypothesis $g(x_0) < e^{-(s+1)}$, we have
\[0 < \lim_{n \rightarrow +\infty} \left| Ad_{n+1}^{s+1}\widetilde{r}_{n} \right|^{\frac{1}{n}} = e^{s+1}g(x_0) < 1, \]
this is a contradiction.
\end{proof}
\smallskip

So we seek for parameters $r,s$ and $B$ to meet the requirement $g(x_0) < e^{-(s+1)}$, and at the same time to make $|\mathcal{\varPsi}_B| \sim \frac{\zeta(2)\zeta(3)}{\zeta(6)} B$ as large as possible. By Proposition \ref{g_x_0} (2), for a fixed $r$ (such that $\frac{r^r}{(r+1)^{r+1}} < e^{-1}$), if we take $B = cs^{1/2} / \log^{1/2} s$ for some constant $c$, then $\lim_{s \rightarrow +\infty} g(x_0)^{\frac{1}{s+1}} < e^{-1}$ if and only if
\[ c < \sqrt{ \frac{4\zeta(6)}{\zeta(2)\zeta(3)} \frac{(r+1)\log(r+1)-r\log(r)-1}{2r+1}}. \]
The maximum point of the function $r \mapsto \frac{(r+1)\log(r+1)-r\log(r)-1}{2r+1}$ is
\[ r_0 = \frac{\sqrt{4e^2+1}-1}{2} \approx 2.26388, \]
with maximal value $1 - \log r_0$. The constant $c_0$ in Theorem \ref{MainThm} is designed by
\[ c_0 = \sqrt{\frac{4\zeta(2)\zeta(3)}{\zeta(6)}\left( 1 -\log r_0  \right)} . \]
This leads to the following proof:

\smallskip
\begin{proof}[Proof of Theorem \ref{MainThm}]
Given any small $\varepsilon > 0$. We first fix a rational number $r = r(\varepsilon)$ sufficiently close to $r_0$ such that
\[ \frac{c_0 - \varepsilon/10}{\zeta(2)\zeta(3)/\zeta(6)} < \sqrt{ \frac{4\zeta(6)}{\zeta(2)\zeta(3)} \frac{(r+1)\log(r+1)-r\log(r)-1}{2r+1}}. \]
Take $B = cs^{1/2}/\log^{1/2} s$ with constant $c = \frac{c_0 - \varepsilon/10}{\zeta(2)\zeta(3)/\zeta(6)}$, by Proposition \ref{g_x_0} (2) and Proposition \ref{SetProposition} (1), there exists $s_0(r,\varepsilon)$ such that for all odd integer $s \geqslant s_0(r,\varepsilon)$, we have $g(x_0) < e^{-(s+1)}$ and $|\mathcal{\varPsi}_B| > (\zeta(2)\zeta(3)/\zeta(6) - \varepsilon/10)B$. Hence, by Proposition \ref{FinalProp}, the number of irrationals among $\zeta(3),\zeta(5),\cdots, \zeta(s)$ is at least
\[ |\mathcal{\varPsi}_B| > (c_0 - \varepsilon) \frac{s^{1/2}}{\log^{1/2} s} .\]
\end{proof}
\bigskip

\section{Remarks on FSZ constructions}
\label{Remarks}

If we choose a general finite set $\mathcal{F} \subset (0,1]$ of rational numbers to be the \emph{zero set} of the auxiliary function $R(t)$, like \cite{FSZ2019} and this note, we design the factor
\begin{equation}\label{A_1_F}
 A_1(\mathcal{F})^n =  \prod_{\theta \in \mathcal{F}} {\rm den}(\theta)^{(2r+1)n}
\end{equation}
to remedy the arithmetic loss from the denominators of rational zeros. Suppose our goal is to prove that there exist $D$ irrational numbers among $\zeta(3)$, $\zeta(5)$, $\cdots$, $\zeta(s)$. In order to eliminate $D-1$ zeta values, in view of \eqref{Hurwitz_zeta_to_zeta}, we assume that there exists $D$ pairwise different positive integers $b_1, b_2 , \cdots, b_D$ such that
\begin{equation}\label{F_can_elimanate_D_values}
 \mathcal{F} \supset \left\{ \frac{1}{b_i} ,\frac{2}{b_i} , \cdots, \frac{b_i}{b_i} \right\}
\end{equation}
for any $i=1,2,\cdots,D$. Then $\mathcal{F}$ contains the following disjoint union:
\[ \mathcal{F} \supset \bigcup_{i=1}^{D} \left\{ \frac{a}{b_i} ~\bigg|~ 1 \leq a \leq b_i, \gcd(a,b_i) = 1 \right\} .\]
Hence, we have
\[ A_1(\mathcal{F}) \geqslant \prod_{i=1}^{D} b_i^{(2r+1)\varphi(b_i)}. \]
Now we consider the magnitude of $A_1(\mathcal{F})$:

\smallskip
\begin{proposition}\label{A_1_F_geq}
If $b_1,b_2,\cdots,b_D$ are $D$ pairwise distinct positive integers, then
\[ \prod_{i=1}^{D} b_i^{\varphi(b_i)} \geqslant \exp\left( \left( \frac{1}{2}\frac{\zeta(6)}{\zeta(2)\zeta(3)} +o_{D \rightarrow +\infty}(1)\right) D^2\log D   \right) . \]
\end{proposition}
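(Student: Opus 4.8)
The plan is to minimize $\prod_{i=1}^D b_i^{\varphi(b_i)}$ over all choices of $D$ distinct positive integers $b_1,\dots,b_D$, and show the minimum is attained (up to lower-order terms) by taking the $b_i$ to be exactly the integers $b$ with the $D$ smallest values of $\varphi(b)$ — in other words, by taking $\{b_1,\dots,b_D\} = \mathcal{\varPsi}_B$ for the threshold $B$ chosen so that $|\mathcal{\varPsi}_B| = D$. Since $\log\prod b_i^{\varphi(b_i)} = \sum_i \varphi(b_i)\log b_i$ and, crucially, $b \leqslant (e^\gamma + o(1))\varphi(b)\log\log\varphi(b)$ (inequality \eqref{b} in the excerpt), we have $\log b_i = (1+o(1))\log\varphi(b_i)$ whenever $\varphi(b_i)\to\infty$, so it suffices to lower-bound $\sum_{i=1}^D \varphi(b_i)\log\varphi(b_i)$. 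The function $y\mapsto y\log y$ is increasing, so this sum is minimized by making the multiset $\{\varphi(b_1),\dots,\varphi(b_D)\}$ as small as possible.

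The first step is to make the greedy comparison rigorous: given any $D$ distinct integers $b_1,\dots,b_D$, I claim $\sum_{i=1}^D \varphi(b_i) \geqslant \sum_{b\in\mathcal{\varPsi}_B}\varphi(b)$ where $B$ is the least value with $|\mathcal{\varPsi}_B|\geqslant D$ (discarding at most finitely many small $b_i$ where the $o(1)$ in \eqref{b} is not yet effective — this contributes only $O(1)$ to the log and is absorbed). Indeed, if some $b_i\notin\mathcal{\varPsi}_B$ then $\varphi(b_i) > B \geqslant \varphi(b)$ for every $b\in\mathcal{\varPsi}_B$, and since $|\mathcal{\varPsi}_B|\geqslant D$ there is some $b\in\mathcal{\varPsi}_B$ not among the $b_i$; swapping $b_i$ for that $b$ strictly decreases $\sum\varphi(b_i)$. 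Iterating, the minimum of $\sum\varphi(b_i)$ over $D$-element sets is achieved on (a subset of) $\mathcal{\varPsi}_B$; the same swapping argument applied to $\sum\varphi(b_i)\log\varphi(b_i)$ (again using monotonicity of $y\log y$) shows the full product is minimized there too, up to the negligible correction.

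The second step is the asymptotic evaluation. By Proposition \ref{SetProposition}(1), $|\mathcal{\varPsi}_B| = (\zeta(2)\zeta(3)/\zeta(6) + o(1))B$, so choosing $B$ with $|\mathcal{\varPsi}_B| = D$ forces $B = (\zeta(6)/(\zeta(2)\zeta(3)) + o(1))D$. Then, exactly as in the proof of Lemma \ref{A1A2}, a summation-by-parts (Abel summation) argument using $|\mathcal{\varPsi}_x| = (\zeta(2)\zeta(3)/\zeta(6)+o(1))x$ gives
\[
\sum_{b\in\mathcal{\varPsi}_B}\varphi(b)\log\varphi(b) = \left(\tfrac12\tfrac{\zeta(2)\zeta(3)}{\zeta(6)} + o(1)\right)B^2\log B.
\]
Substituting $B = (\zeta(6)/(\zeta(2)\zeta(3)) + o(1))D$ and $\log B = (1+o(1))\log D$ turns the leading constant $\frac12\frac{\zeta(2)\zeta(3)}{\zeta(6)}\cdot\big(\frac{\zeta(6)}{\zeta(2)\zeta(3)}\big)^2 = \frac12\frac{\zeta(6)}{\zeta(2)\zeta(3)}$, yielding $\log\prod_{i=1}^D b_i^{\varphi(b_i)} \geqslant (\frac12\frac{\zeta(6)}{\zeta(2)\zeta(3)} + o(1))D^2\log D$ as claimed.

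The main obstacle is the swapping/greedy argument of step one: one must be careful that the monotonicity exchange is valid not just for $\sum\varphi(b_i)$ but for the weighted sum $\sum\varphi(b_i)\log\varphi(b_i)$, and that the finitely many small indices (where $b$ and $\varphi(b)$ are not yet comparable via \eqref{b}) genuinely cost only $O(1)$ in the logarithm and hence disappear into the $o(D^2\log D)$ error. A clean way to handle this is to fix a large constant $B_0$, treat all $b_i \leqslant e^\gamma B_0\log\log B_0$ separately (there are boundedly many, contributing $O_{B_0}(1)$), and run the swap argument only on the remaining indices where $\log b_i = (1+o(1))\log\varphi(b_i)$ holds uniformly; then let $B_0\to\infty$ after $D\to\infty$. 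Everything else is the Abel-summation computation already carried out in Lemma \ref{A1A2}.
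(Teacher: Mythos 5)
Your proof is correct and follows essentially the same route as the paper's: reduce to lower-bounding $\sum_i\varphi(b_i)\log\varphi(b_i)$, pass by an exchange/rearrangement argument to the $D$ integers with the smallest totient values (i.e.\ essentially to $\mathcal{\varPsi}_B$ with $|\mathcal{\varPsi}_B|\approx D$, so $B=(\zeta(6)/(\zeta(2)\zeta(3))+o(1))D$), and evaluate by the Abel summation already carried out in Lemma \ref{A1A2}. The only remark worth making is that your appeal to inequality \eqref{b} and the attendant treatment of small $b_i$ is unnecessary for this one-sided bound: the trivial inequality $b_i\geqslant\varphi(b_i)$ already gives $\varphi(b_i)\log b_i\geqslant\varphi(b_i)\log\varphi(b_i)$ for every $i$, which is all the paper uses.
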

\begin{proof}
We have
\begin{align*}
\log \prod_{i=1}^{D} b_i^{\varphi(b_i)} &\geqslant \sum_{i=1}^{D} \varphi(b_i)\log \varphi(b_i) \\
&\geqslant \sum_{i=1}^{D} \varphi(b'_i)\log \varphi(b'_i)
\end{align*}
where $b'_1,b'_2,\cdots,b'_D$ are the $D$ smallest positive integers in the linear order $\prec $ defined by
\[m_1 \prec m_2 \Leftrightarrow \left(  \varphi(m_1)<\varphi(m_2) \text{~or~} \left( \varphi(m_1) = \varphi(m_2) \text{~and~} m_1 < m_2 \right) \right). \]
For any positive real number $x$, we define $\mathcal{\varPsi}_x =  \{ b \in \mathbb{N} \mid \varphi(b) \leqslant x \}$. Then there exists an integer $B$ such that $\mathcal{\varPsi}_{B-1} \subset \{ b'_1,\cdots,b'_D \} \subset \mathcal{\varPsi}_{B}$. Following the same lines in the proof of Lemma \ref{A1A2}, we completes this proposition.
\end{proof}
\smallskip

Proposition \ref{A_1_F_geq} and Lemma \ref{A1A2} show that, under the constraints \eqref{A_1_F} and \eqref{F_can_elimanate_D_values}, $\mathcal{F} = \mathcal{F}_B$ in Definition \ref{SetDefinition} is the optimal choice.

For the factorial factor
\[ \frac{n!^{s+1}}{\left( \frac{n}{{\rm den}(r)} \right)!~^{{\rm den}(r)(2r+1)|\mathcal{F}|}}, \]
comparing to the corresponding factor $n!^{s+1-(2r+1)|\mathcal{F}|}$ in \cite{BR01} or \cite{FSZ2019}, we have an extra waste of
\[ \binom{n}{ \underbrace{\frac{n}{{\rm den}(r)},\cdots,\frac{n}{{\rm den}(r)}}_{{\rm den}(r) \text{~in number~}}}^{(2r+1)|\mathcal{F}|} \leqslant {\rm den}(r)^{(2r+1)|\mathcal{F}|n}, \]
which is asymptotically negligible with respect to $A_1(\mathcal{F})^n$.

There exist some arithmetic saving factors known as $\Phi_n$ factors. They are certain products over primes in the range $C_{B,r} \sqrt{n} \leqslant p \leqslant n$ (Here we can take $C_{B,r}$ as $\sqrt{2(r+1)B\log\log B}$). We mention that the saving from $\Phi_n^{-1}$ plays an important role in small cases for the odd zeta problem, see, for instance, \cite{Zu01,RZ18}, \cite[\S 4]{Zu02}, \cite[Chapitre 11]{KR07}. However, like \cite[Remark 2]{FSZ2019}, the known types of $\Phi_n$ factors have no effect on asymptotics. The reason is that, by Definition \ref{auxiliary_function}, equations \eqref{a_ik} and \eqref{R_equalsto_FFF}, for any $k \in \{ 0,1,\cdots,n \}$,
\begin{align*}
 a_{s,k} &= \left( (t+k)^s R_n(t) \right) \mid_{t=-k}  \\
 &= (-1)^{k}\binom{n}{k}^s \frac{n!(k+1)_{rn}(-k+n+1)_{rn}}{ \left( \frac{n}{ {\rm den}(r)} \right)!~^{{\rm den}(r)(2r+1)} } \prod_{\frac{a}{b} \in \mathcal{F}_B \setminus \{1\} } F_{b,a}(-k).
\end{align*}
For any prime $p$ with $C_{B,r} \sqrt{n} \leqslant p \leqslant n$, the $p$-adic order of $a_{s,0}$ is relatively small. If we define
\[ \widetilde{\Phi}_n =  \prod_{C_{B,r} \sqrt{n} \leqslant p \leqslant n~~~~} p^{v_p(\gcd\{ a_{s,k} \}_{k=0}^{n} )}, \]
then we can show that $\widetilde{\Phi}_n \leqslant A_2(B)^n \cdot d_n^{({\rm den}(r)(2r+1) + 1)|\mathcal{F}_B|}$, which is asymptotically negligible. One may want to directly save the common divisor of $d_{n+1}^{s+1}\rho_{0, \theta}$ and $d_{n+1}^{s+1}\rho_i$, but it is out of current research. The small cases are more difficult to study, up to now, except Ap{\' e}ry's theorem \cite{Ap79} that $\zeta(3)$ is irrational, the most remarkable result is that at least one of $\zeta(5),\zeta(7),\zeta(9),\zeta(11)$ is irrational, due to Zudilin \cite{Zu01} in 2001.

At last, we claim that, by making out all the implicit constants, we have a weaker but explicit result: For all odd integer $s \geqslant 10^4$, there are at least $\frac{1}{10} \frac{s^{1/2}}{(\log s)^{1/2}}$ many irrational numbers among $\zeta(3),\zeta(5),\zeta(7),\cdots,\zeta(s)$.
\bigskip

\end{document}